\documentclass[11pt]{article}

\usepackage{amsmath,amsthm}

\usepackage{amssymb,latexsym}

\usepackage{enumerate}


\topmargin -0.45cm
\oddsidemargin 0.75cm
\evensidemargin 1.25cm
\textwidth 14.50cm
\textheight 22.45cm

\newcommand{\BB}{{\cal B}}

\newcommand{\EE}{{\cal E}}
\newcommand{\FF}{{\cal F}}

\newcommand{\HH}{{\cal H}}

\newcommand{\MM}{{\cal M}}

\newcommand{\RR}{{\cal R}}
\newcommand{\VV}{{\cal V}}
\newcommand{\WW}{{\cal W}}

\newcommand{\BM}{{\mathbb M}}
\newcommand{\BN}{{\mathbb N}}
\newcommand{\BR}{{\mathbb R}}

\newcommand{\BBM}{{\mathbf M}}
\newcommand{\BBX}{{\mathbf X}}

\newcommand{\fch}{{\mathbf{1}}}

\newtheorem{theorem}{\bf Theorem}[section]
\newtheorem{corollary}[theorem]{\bf Corollary}

\theoremstyle{definition}
\newtheorem{definition}[theorem]{Definition}

\newtheorem{remark}[theorem]{Remark}

\numberwithin{equation}{section}

\begin{document}

\title {Renormalized solutions of semilinear equations
involving measure data and operator corresponding to Dirichlet
form}
\author {Tomasz Klimsiak and Andrzej Rozkosz}
\date{}
\maketitle
\begin{abstract}
We generalize the notion of renormalized solution to semilinear
elliptic and parabolic equations involving operator associated
with general (possibly nonlocal) regular Dirichlet form and smooth
measure on the right-hand side. We show that under mild
integrability assumption on the data a quasi-continuous function
$u$ is a renormalized solution to an elliptic (or parabolic)
equation  in the sense of our definition iff $u$ is its
probabilistic solution, i.e.  $u$ can be represented by a suitable
nonlinear Feynman-Kac formula. This implies in particular that for
a broad class of local and nonlocal semilinear equations there
exists a unique renormalized solution.
\end{abstract}
{\small{\bf Keywords:} Semilinear equation, Dirichlet form and
operator, measure data, renormalized solution, Feynman-Kac
formula}
\medskip\\
{\small{\bf Mathematics Subject Classification (2010)}. Primary:
35D99. Secondary: 35J61, 35K58, 60H30}

\footnotetext{T. Klimsiak: Institute of Mathematics, Polish
Academy of Sciences, \'Sniadeckich 8, 00-956 Warszawa, Poland, and
Faculty of Mathematics and Computer Science, Nicolaus Copernicus
University, Chopina 12/18, 87-100 Toru\'n, Poland. e-mail:
tomas@mat.umk.pl}

\footnotetext{A. Rozkosz: Faculty of Mathematics and Computer
Science, Nicolaus Copernicus University, Chopina 12/18, 87-100
Toru\'n, Poland. e-mail: rozkosz@mat.umk.pl}

\section{Introduction}

The aim of this paper is to extend  the notion of renormalized
solution to encompass semilinear elliptic and parabolic equations
involving measure data and operators associated with  Dirichlet
forms. The paper consists of two parts. In the first one we are
concerned with elliptic equations of the form
\begin{equation}
\label{eq1.1} -Lu=f(x,u)+\mu.
\end{equation}
In (\ref{eq1.1}), $L$ is the operator associated with a  regular
Dirichlet form $(\EE,D(\EE))$ on $L^2(E;m)$ and
$f:E\times\BR\rightarrow\BR$ is a measurable function. As for
$\mu$ we assume that it is a bounded smooth measure on $E$, i.e. a
measure of bounded total variation on $E$ which charges no set of
zero capacity associated with the form $(\EE,D(\EE))$. Note that
the class of operators $L$ we consider is quite large. It contains
many local as well as nonlocal operators. The model examples are
Laplacian and fractional Laplacian (many other examples are to be
found for instance in \cite{FOT,KR:JFA,KR:CM,MR}).

An important problem one encounters when dealing with equations of
the form (\ref{eq1.1}) is to define properly a solution. In case
$L$ is  local and (\ref{eq1.1}) is linear, i.e. $f$ does not
depend on $u$, some definition, now called Stampacchia's
definition  by duality, was proposed in \cite{Stam}. To deal with
semilinear equations  the definitions of entropy solution (see
\cite{BBGGPV}) and of renormalized solution (see \cite{DMOP}) have
been introduced. For a comparison of different forms of these
definitions as well as remarks on other concepts of solutions see
\cite{DMOP}. In case $E=D\subset\BR^d$ is a bounded domain and $L$
is a uniformly elliptic operator in divergence form with Dirichlet
boundary conditions associated with the classical form
\[
\EE(\varphi,\psi)=\int_D(a\nabla\varphi,\nabla\psi)\,dx,\quad
\varphi,\psi \in H^1_0(D)
\]
one of the equivalent definitions of a solution of (\ref{eq1.1})
given in \cite{DMOP} says that a quasi-continuous
$u:D\rightarrow\BR$ is a renormalized solution of (\ref{eq1.1}) if
$f(\cdot,u)\in L^1(D)$, $T_ku\in H^1_0(D)$ for $k>0$, where
$T_ku=((-k)\vee u)\wedge k$,  and there exists a  sequence
$\{\nu_k\}$ of bounded smooth measures on $D$ such that
$\|\nu_k\|_{TV}\rightarrow0$ as $k\rightarrow\infty$ and for any
bounded quasi-continuous $v\in H^1_0(D)$ and $k\in\BN$,
\begin{equation}
\label{eq1.4} \int_D(a\nabla(T_ku),\nabla v)\,dx
=\int_Df(x,u)v(x)\,dx+\int_Dv(x)\,dx + \int_Dv(x)\,\nu_k(dx).
\end{equation}
In fact, the notion of entropy or renormalized solution can be
applied to deal with more general then (\ref{eq1.1}) equations in
which  $L$ is a Leray-Lions type operator and $\mu$ is not
necessarily smooth.

Another approach to (\ref{eq1.1}), covering both local and
nonlocal operators, have been proposed in \cite{KR:JFA,KR:CM}. In
this probabilistic in nature approach,  a quasi-continuous (with
respect to the form $\EE$) function $u:E\rightarrow\BR$ is a
solution of (\ref{eq1.1}) if the following nonlinear Feynman-Kac
formula
\begin{equation}
\label{eq1.3} u(x)=E_x\Big(\int^{\zeta}_0f(X_t,u(X_t))\,dt
+\int^{\zeta}_0dA^{\mu}_t\Big)
\end{equation}
is satisfied for quasi-every $x\in E$. Here $\BM=(X,P_x)$ is a
Markov process with life time $\zeta$ associated with $\EE$, $E_x$
denotes the expectation with respect to $P_x$ and $A^{\mu}$ is the
additive functional of $\BM$ associated with $\mu$ in the Revuz
sense (see Section \ref{sec2}). In (\ref{eq1.3}) we only assume
that $u$ is quasi-continuous and the integrals make sense. In
fact, if (\ref{eq1.3}) holds and $f(\cdot,u)\in L^1(E;m)$ then
using the probabilistic potential theory and the theory of
Dirichlet forms one can show that $u$ has some additional
regularity properties. Namely,  $T_ku$ belongs to the extended
Dirichlet space $D_e(\EE)$ for every $k>0$.

In \cite{KR:JFA,KR:CM}  also a purely analytical definition of a
solution of (\ref{eq1.1}) resembling Stampacchia's definition is
proposed (see also \cite{KPU,LPPS} for another approach in case of
linear equation with $L$ being a fractional Laplacian). We call it
a solution in the sense of duality. In \cite{KR:JFA,KR:CM} it is
shown that under quite general assumptions on $\EE,f,\mu$ a
function $u$ is a solution in the sense of duality if and only if
it is a probabilistic solution defined by (\ref{eq1.3}). However,
the definition in the sense of duality seems to be not
particularly handy tool for investigating (\ref{eq1.1}).

The natural  question arises whether the concept of renormalized
solution can be carried over to general (possibly nonlocal)
operators corresponding to $\EE$ (for some partial results in this
direction see \cite{AAB}). An obvious related question to ask is
what is the relation between (\ref{eq1.4}) and (\ref{eq1.3}), i.e.
between renormalized and probabilistic solutions? It appears that
(\ref{eq1.4}) is the right form of the definition to be
generalized to encompass wider class of operators. In the paper,
under the assumption that $\EE$ is transient, we define
renormalized solution of (\ref{eq1.1}) as a quasi-continuous
function $u:E\rightarrow\BR$ such that $f(\cdot,u)\in L^1(E;m)$,
$T_ku\in D_e(\EE)$ for $k>0$ and there is a sequence $\{\nu_k\}$
of bounded smooth measures on $E$ such that
$\|\nu_k\|_{TV}\rightarrow0$ and
\begin{equation}
\label{eq1.5}
\EE(T_ku,v)=\int_Ef(x,u)v(x)\,m(dx)+\int_Ev(x)\,\mu(dx) +
\int_Ev(x)\,\nu_k(dx)
\end{equation}
for every $k\in\BN$ and every bounded quasi-continuous $v\in
D_e(\EE)$.  Thus (\ref{eq1.5}) is a direct extension of
(\ref{eq1.4}) to general transitive Dirichlet forms. Our main
theorem says that for transitive forms (\ref{eq1.3}) is equivalent
to (\ref{eq1.5}), or more precisely, that  $u$ is a probabilistic
solution of (\ref{eq1.1}) if and only if it is a renormalized
solution of (\ref{eq1.1}). Since one can prove that under some
assumptions on $f$ there exists a unique probabilistic solution of
(\ref{eq1.1}) for $L$ associated with $\EE$ (see
\cite{KR:JFA,KR:CM} and Section \ref{sec3} for some examples), our
result a fortiori says that (\ref{eq1.5}) provides right
definition of a solution. In particular,  (\ref{eq1.5}) ensures
uniqueness  for interesting classes of equations. In general, the
equivalence of (\ref{eq1.3}) and (\ref{eq1.5}) sheds new light on
the nature of both probabilistic and analytic (renormalized)
solutions of (\ref{eq1.1}). What is perhaps more important, it
also says that in the study of (\ref{eq1.1}) one can use both
probabilistic and analytical methods from the theory of PDEs. Let
us point out once again, that contrary to \cite{DMOP}, in our
theorem we assume that the measure $\mu$ is smooth. An interesting
open problem is how to define renormalized solutions for general
bounded measures, at least for some classes of nonlocal operators.
Finally, let us note that in case $L=\Delta$ the equivalence
between probabilistic and renormalized solutions to (\ref{eq1.1})
was observed in \cite{KR:JEE}.

In the second part of the paper we consider parabolic equation of
the form
\begin{equation}
\label{eq1.2} -\frac{\partial u}{\partial t}-L_tu
=f(t,x,u)+\mu,\quad u(T)=\varphi,
\end{equation}
where $\varphi:E\rightarrow\BR,$ $f:[0,T]\times E\rightarrow\BR$,
the operators $\frac{\partial}{\partial t}+L_t$ correspond to some
time dependent regular Dirichlet form $\EE^{0,T}$ and $\mu$ is a
bounded measure on $(0,T]\times E$ which is smooth with respect to
the capacity associated with $\EE^{0,T}$.

In case $L_t$ are local, a definition of a renormalized  solution
of equations of the form (\ref{eq1.2}) involving more general
nonlinear local operators $L_t$ of Leray-Lions type but with $f$
not depending on $u$ have been introduced in \cite{DPP} (see also
\cite{BDGO} for earlier existence results for equations with
general bounded measure $\mu$ and \cite{BM} for uniqueness results
in the case where $\mu$ is a function in $L^1$). In \cite{DP,Pe}
definitions of renormalized solutions to (\ref{eq1.2}) with
Leray-Lions type operators and $f$ depending on $u$ have been
proposed (in \cite{Pe} equations with general, not necessarily
smooth measures are considered). Another definition of a
renormalized solution, which is suitable for handling equations
with local operators and nonlinear $f$, have been introduced in
\cite{PPP}. It may be viewed as parabolic analogue of
(\ref{eq1.4}). Existence and uniqueness results for weak solutions
to linear equations with fractional Laplacian and $\mu$ being a
function in $L^1$ are proved in \cite{LPPS}. A probabilistic
approach to (\ref{eq1.2}) has been developed in \cite{K:JFA}. A
probabilistic solution of (\ref{eq1.2})  is defined similarly to
(\ref{eq1.3}), but with $\BM$ replaced by a time-space Markov
process associated with $\EE^{0,T}$. In \cite{K:JFA} the
existence, uniqueness and regularity of probabilistic solutions of
(\ref{eq1.2}) is proved for $f$ satisfying some natural conditions
(monotonicity together with mild integrability conditions) and
general operators associated with $\EE^{0,T}$.

Similarly to the elliptic case, in the paper we generalize the
notion of a renormalized solution of \cite{PPP} to the case of
general operators corresponding to $\EE^{0,T}$. Then we  show that
the proposed definition is equivalent to the probabilistic
definition considered in \cite{K:JFA}. As in elliptic case, this
shows that the  renormalized solutions are properly defined and
gives new information on the structure of solutions. We illustrate
the utility of our  result by stating some theorems on existence
and uniqueness of renormalized solutions of parabolic equations
with $f$ satisfying the monotonicity condition and mild
integrability conditions.

For simplicity, in the paper we confine ourselves to equations
with operators corresponding to  regular forms, but our results
can be generalized to quasi-regular forms (see remarks at the end
of Sections \ref{sec3} and \ref{sec4}).

\section{Preliminaries} \label{sec2}

In the paper we assume that $E$ is a locally compact separable
metric space and $m$ is an everywhere dense Radon measure on $E$,
i.e. $m$ is a non-negative Borel measure on $E$ finite on compact
sets and strictly positive on non-empty open sets.

We set $E^1=\BR\times E$, $E_T=[0,T]\times E$,
$E_{0,T}=(0,T]\times E$. By $\BB(E)$ we denote the $\sigma$-field
of Borel subsets of $E$. $\BB_b(E)$ is the set of all real bounded
Borel measurable functions on $E$ and $\BB_b^+(E)$ is the subset
of $\BB_b(E)$ consisting of positive functions. The sets
$\BB(E^1)$, $\BB_b(E^1)$, $\BB_b^+(E^1)$ are defined analogously.

We set $H=L^2(E;m)$ and $\HH_{0,T}=L^2(0,T;H)$. The last space we
identify with $L^2(E_T;m_1)$, where $m_1=dt\otimes m$. By
$(\cdot,\cdot)_H$,  $(\cdot,\cdot)_{\HH_{0,T}}$ we denote the
usual inner products in $H$ and $\HH_{0,T}$, respectively.

\subsection{Dirichlet forms}
\label{sec2.1}

In what follows we assume that $(\EE,D(\EE))$ is a (non-symmetric)
Dirichlet form on $H$, i.e. positive definite closed form
satisfying the weak sector condition and such that $(\EE,D(\EE))$
has both the sub-Markov and the dual sub-Markov property. For the
definitions we refer the reader to \cite{MR}. Here let us only
recall hat $(\EE,D(\EE))$ satisfies the weak sector condition if
there is $K>0$ (called the sector constant) such that
\[
|\EE_1(u,v)|\le K\EE_1(u,u)^{1/2}\EE_1(v,v)^{1/2},\quad u,v\in
D(\EE),
\]
where $\EE_{\alpha}(u,v)=\EE(u,v)+\alpha(u,v)_H$ for $\alpha\ge0$.
If there is $K>0$ such that
\begin{equation}
\label{eq2.9} |\EE(u,v)|\le K\EE(u,u)^{1/2}\EE(v,v)^{1/2},\quad
u,v\in D(\EE),
\end{equation}
then we say that $(\EE,D(\EE))$ satisfies the strong sector
condition.

By Theorems I.2.8 and I.4.4 in \cite{MR} every Dirichlet form on
$H$ determines uniquely strongly continuous contraction resolvents
$(G_{\alpha})_{\alpha>0}$, $(\hat G_{\alpha})_{\alpha>0}$ on $H$
such that $G_{\alpha}, \hat G_{\alpha}$ are sub-Markov,
$G_{\alpha}(H)\subset D(\EE)$, $\hat G_{\alpha}(H)\subset D(\EE)$
and
\[
\EE_{\alpha}(G_{\alpha}f,u)=(f,u)_{H}=\EE_{\alpha}(u,\hat
G_{\alpha}f),\quad f\in H,\, u\in D(\EE),\,\alpha>0.
\]
In fact, from the sub-Markov and the dual sub-Markov property of
$(\EE,D(\EE)$  it follows that $(G_{\alpha})_{\alpha>0},(\hat
G_{\alpha})_{\alpha>0}$ may be extended to sub-Markov resolvents
on $L^{\infty}(E;m)$ and on $L^{1}(E;m)$, respectively (see
\cite[Section 1.1]{O3}).

Let $f\in L^{\infty}(E;m)$ be a non-negative function. Since
$G_{1/l}f$ increases as $l\uparrow\infty$, the potential operator
\[
Gf=\lim_{l\rightarrow\infty}G_{1/l}f
\]
is $m$-a.e. well defined  but may take the value $\infty$. We say
that $\EE$ is transient if $Gf<\infty$ $m$-a.e. for every
non-negative $f\in L^{\infty}(E;m)$.

Let $\tilde \EE$ denote the symmetric part of $\EE$, i.e.
$\tilde\EE(u,v)=\frac12(\EE(u,v)+\EE(v,u))$. The extended
Dirichlet space $D_e(\EE)$ associated with a Dirichlet form
$(\EE,D(\EE))$ is the family of measurable functions
$u:E\rightarrow\BR$ such that $|u|<\infty$ $m$-a.e. and there
exists an $\tilde\EE$-Cauchy sequence $\{u_n\}\subset D(\EE)$ such
that $u_n\rightarrow u$ $m$-a.e. The sequence $\{u_n\}$ is called
an approximating sequence for $u\in D_e(\EE)$.

For $u\in D_e(\EE)$ we set
$\EE(u,u)=\lim_{n\rightarrow\infty}\EE(u_n,u_n)$, where $\{u_n\}$
is an approximating sequence for $u$. If moreover $\EE$ satisfies
the strong sector condition (\ref{eq2.9}) then we may extend $\EE$
to $D_e(\EE)$ by putting
$\EE(u,v)=\lim_{n\rightarrow\infty}\EE(u_n,v_n)$ with
approximating sequences $\{u_n\}$ and $\{v_n\}$ for $u\in
D_e(\EE)$ and $v\in D_e(\EE)$, respectively (see \cite[Section
1.3]{O3}). This extension satisfies again the strong sector
condition. By \cite[Theorem 1.3.9]{O3}, if $(\EE,D(\EE))$ is
transient then $(D_e(\EE),\tilde\EE)$ is a Hilbert space.

Given a Dirichlet form $(\EE,D(\EE))$ we define quasi notions with
respect to $\EE$ (exceptional sets, nests and quasi-continuity) as
in \cite[Chapter III]{MR} (see also \cite[Sections 2.1, 2.2]{O3}).
We will say that a property of points in $E$ holds quasi
everywhere (q.e. for short) if it holds outside some exceptional
set.

In the paper we assume that $(\EE,D(\EE))$ is regular (see
\cite[Section IV.4]{MR} or \cite[Section 1.2]{O3} for the
definition). By \cite[Proposition IV.3.3]{MR}, if $(\EE,D(\EE))$
is a regular Dirichlet form then each element $u\in D(\EE)$ admits
an quasi-continuous $m$-version, which we denote by $\tilde u$,
and $\tilde u$ is q.e. unique for every $u\in D(\EE)$. If moreover
$(\EE,D(\EE))$ is transient then such a unique $m$-version $\tilde
u$ exists for every $u\in D_e(\EE)$. This follows from
\cite[Theorem 2.1.7]{FOT} and the fact that $D_e(\EE)$ and the
notion of quasi-continuity only depend on the symmetric part of
$\EE$.

A positive measure $\mu$ on $\BB(E)$ is said to be smooth ($\mu\in
S(E)$ in notation) if $\mu(B)=0$ for all exceptional sets
$B\in\BB(E)$ and there exists an nest $\{F_k\}_{k\in\BN}$ of
compact sets such that $\mu(F_k)<\infty$ for $k\in\BN$.

Given a transient form $(\EE,D(\EE))$ satisfying the strong sector
condition we will denote by $S^{(0)}_0(E)$ the set of measures of
finite $0$-order energy integral, i.e. the subset of $S(E)$
consisting of all measures $\nu\in S(E)$ such that for some $c>0$,
\[
\int_E|\tilde u(x)|\,\nu(dx)\le c\EE(u,u)^{1/2},\quad u\in
D_e(\EE).
\]
If $\nu\in S^{(0)}_{0}(E)$ then from the Lax-Milgram theorem it
follows that there is a unique element $\hat U\nu$ (called a
copotential of $\nu$) such that
\[
\EE(u,\hat U\nu)=\int_E\tilde u(x)\,\nu(dx),\quad u\in D_e(\EE).
\]
By $\hat S^{(0)}_{00}(E)$ we  denote the subset of $S^{(0)}_0(E)$
consisting of all measures $\nu$ such that $\nu(E)<\infty$ and
$\|\hat U\nu\|_{\infty}<\infty$.

\subsection{Time dependent Dirichlet forms}
\label{sec2.2}

We assume that we are given a family $\{B^{(t)},t\in[0,T]\}$ of
Dirichlet forms on $H$ with common domain $V$ and sector constant
$K$  independent of $t$. We also assume that
\begin{enumerate}
\item[(a)] $[0,T]\ni t\mapsto B^{(t)}(\varphi,\psi)$ is measurable for every
$\varphi,\psi\in V$,
\item[(b)] there is a constant $\lambda\ge1$ such that
$\lambda^{-1}B(\varphi,\varphi)\le
B^{(t)}(\varphi,\varphi)\le\lambda B(\varphi,\varphi) $ for every
$t\in[0,T]$ and $\varphi\in V$, where
$B(\varphi,\varphi)=B^{(0)}(\varphi,\varphi)$.
\end{enumerate}
By putting $B^{(t)}=B$ for $t\in\BR\setminus[0,T]$ we may and will
assume that $B^{(t)}$ is defined and satisfies (a), (b) for
$t\in\BR$.

By the definition of a Dirichlet form $V$ is a dense subspace of
$H$ and $(B,V)$ is closed. Therefore $V$ is a real Hilbert space
with respect to $\tilde B_1(\cdot,\cdot)$, which is densely and
continuously embedded in $H$. By $\|\cdot\|_V$ we denote the norm
in $V$, i.e. $\|\varphi\|^2_V=B_1(\varphi,\varphi)$, $\varphi\in
V$. By $V'$ we denote the dual space of $V$ and by
$\|\cdot\|_{V'}$  the corresponding norm. We set $\HH=L^2(\BR;H)$,
$\VV=L^2(\BR;V)$, $\VV'=L^2(\BR;V')$ and
\begin{equation}
\label{eq2.3} \|u\|^2_{\VV}=\int_{\BR}\|u(t)\|^2_V\,dt,\quad
\|u\|^2_{\VV'}=\int_{\BR}\|u(t)\|^2_{V'}\,dt.
\end{equation}
We shall identify  $H$ and its dual $H'$. Then $V\subset H\simeq
H'\subset V'$ continuously and densely, and hence
$\VV\subset\HH\simeq\HH'\subset\VV'$ continuously and densely.

For $u\in\VV$ we denote by $\frac{\partial u}{\partial t}$ the
derivative in the distribution  sense of the function $t\mapsto
u(t)\in V$ and we set
\begin{equation}
\label{eq2.4} \WW=\{u\in \VV:\frac{\partial u}{\partial t}\in
\VV'\}, \quad \|u\|_{\WW}=\|u\|_{\VV}+\|\frac{\partial u}{\partial
t}\|_{\VV'}
\end{equation}

We will consider time dependent Dirichlet forms $\EE$ and
$\EE^{0,T}$  associated with the families
$\{(B^{(t)},V),t\in\BR\}$ and $\{(B^{(t)},V),t\in[0,T]\}$,
respectively. We define $\EE$ by
\begin{equation}
\label{eq2.23} \mathcal{E}(u,v)=\left\{
\begin{array}{l}\langle-\frac{\partial u}{\partial t},v\rangle+\BB(u,v),
\quad u\in\WW,v\in\VV,\smallskip \\
\langle\frac{\partial v}{\partial t}, u\rangle+\BB(u,v),\quad
u\in\VV,v\in\WW,
\end{array}
\right.
\end{equation}
where $\langle\cdot,\cdot\rangle$ is the duality pairing between
$\VV'$ and $\VV$ and
\[
\BB(u,v)=\int_{\BR}B^{(t)}(u(t),v(t))\,dt.
\]
Note that $\EE$ can be identified with some generalized Dirichlet
form  (see \cite[Example I.4.9.(iii)]{S}).

Given a time dependent form (\ref{eq2.23}) we define capacity as
in \cite[Section 6.2]{O3}, and then using it we define
quasi-notions (exceptional sets, nests and quasi-continuity) as in
\cite[Section 6.2]{O3}. Note that by \cite[Theorem 6.2.11]{O3}
each element $u$ of $\WW$ has a quasi-continuous $m_1$-version. We
will denote it by $\tilde u$.

To define $\EE^{0,T}$, we set $\HH_{0,T}=L^2(0,T;H)$,
$\VV_{0,T}=L^2(0,T;V)$, $\VV'_{0,T}=L^2(0,T;V')$ and
$\WW_{0,T}=\{u\in \VV_{0,T}:\frac{\partial u}{\partial t}\in
\VV'_{0,T}\}$ (the norms in $\VV_{0,T}$, $\VV'_{0,T}$, $\WW_{0,T}$
are defined analogously to (\ref{eq2.3}), (\ref{eq2.4})). Let
$C([0,T];H)$ denote the space of all continuous functions on
$[0,T]$ with values in $H$ equipped with the norm
$\|u\|_C=\sup_{0\le t\le T}\|u(t)\|_H$. It is known (see, e.g.,
\cite[Theorem 2]{ZKO} that there is a continuous embedding of
$\WW_{0,T}$ into $C([0,T];H)$, i.e. for every $u\in\WW_{0,T}$ one
can find  $\bar u\in C([0,T];H)$ such that $u(t)=\bar u(t)$ for
a.e. $t\in[0,T]$ (with respect to the Lebesgue measure) and
\begin{equation}
\label{eq2.20} \|\bar u\|_C\le M\|u\|_{\WW_{0,T}}
\end{equation}
for some $M>0$. In what follows we adopt the convention that any
element of $\WW_{0,T}$ is already in $C([0,T];H)$. With this
convention we may define the spaces
\[
\WW_0=\{u\in\WW_{0,T}:u(0)=0\},\quad
\WW_T=\{u\in\WW_{0,T}:u(T)=0\}.
\]
By the definition of $\WW_{0,T}$, ${\partial/\partial
t}:\WW_{0,T}\rightarrow\VV'_{0,T}$ is bounded. Since $\WW_0$ is
dense in $\VV_{0,T}$, we can regard the  restriction of
${\partial/\partial t}$ to $\WW_0$ as an unbounded operator from
$\VV_{0,T}$ to $\VV'_{0,T}$ defined on $\WW_0$. Its adjoint is
defined on $\WW_T$ and is given by $-{\partial/\partial t}$ (see,
e.g., \cite{ZKO}). Finally, we set
\begin{equation}
\label{eq2.6} \mathcal{E}^{0,T}(u,v)=\left\{
\begin{array}{l}\langle-\frac{\partial u}{\partial t},v\rangle
+\int_0^T B^{(t)}(u(t),v(t))\,dt,
\quad u\in\WW_T,v\in\VV_{0,T},\smallskip \\
\langle\frac{\partial v}{\partial t}, u\rangle+\int_0^T
B^{(t)}(u(t),v(t))\,dt,\quad u\in\VV_{0,T},v\in\WW_0,
\end{array}
\right.
\end{equation}
where now $\langle\cdot,\cdot\rangle$ denote the duality pairing
between $\VV'_{0,T}$ and $\VV_{0,T}$. As in the case of $\EE$, the
form $\EE^{0,T}$ can be identified with some generalized Dirichlet
form  (see \cite[Example I.4.9.(iii)]{S}).

By Propositions I.3.4 and I.3.6 in \cite{S} the form $\EE^{0,T}$
determines uniquely strongly continuous resolvents
$(G^{0,T}_{\alpha})_{\alpha>0}$, $(\hat
G^{0,T}_{\alpha})_{\alpha>0}$  on $\HH_{0,T}$ such that
$G^{0,T}_{\alpha}$, $\hat G^{0,T}_{\alpha}$ are sub-Markov,
$G^{0,T}_{\alpha}(\HH_{0,T})\subset\WW_T$, $\hat
G^{0,T}_{\alpha}(\HH_{0,T})\subset\WW_0$ and
\[
\EE^{0,T}_{\alpha}(G^{0,T}_{\alpha}\eta,u)
=(u,\eta)_{\HH_{0,T}},\quad \EE^{0,T}_{\alpha}(u,\hat
G^{0,T}_{\alpha}\eta)=(u,\eta)_{\HH_{0,T}},\quad u\in\VV_{0,T}
,\eta\in\HH_{0,T},
\]
where
$\EE^{0,T}_{\alpha}(u,v)=\EE^{0,T}(u,v)+\alpha(u,v)_{\HH_{0,T}}$
for $\alpha\ge0$.

\subsection{Markov processes and additive functionals}
\label{sec2.3}

In what follows $\Delta$ is a one-point compactification of $E$.
If $E$ is already compact then we adjoin $\Delta$ to $E$ as an
isolated point.

In the case of Dirichlet forms (and elliptic equations) we adopt
the convention that every function $f$ on $E$ is extended to
$E\cup\{\Delta\}$  by setting $f(\Delta)=0$.

In the case of time dependent Dirichlet forms (and parabolic
equations) we adopt the convention that every function $\varphi$
on $E$ is extended to $E^1$ by setting $\varphi(t,x)=\varphi(x)$,
$(t,x)\in E^1$, and every function $f$ on $E^1$ (resp. $E_{0,T}$)
is extended to $E^1\cup\{\Delta\}$ by setting $f(\Delta)=0$ (resp.
$f(z)=0$ for $z\in E^1\cup\{\Delta\}\setminus E_{0,T})$.

\paragraph{Dirichlet forms}

Let $(\EE,D(\EE))$ be a regular Dirichlet form on $H$. Then there
exists a unique  Hunt process
$\BM=(\Omega,(\FF_t)_{t\ge0},(X_t)_{t\ge0},\zeta,(P_x)_{x\in
E\cup\{\Delta\}})$  with state space $E$, life time $\zeta$ and
cemetery state $\Delta$  properly associated with $(\EE,D(\EE))$
(see Theorems IV.3.5, IV.6.4 and V.2.13 in \cite{MR}). The last
statement means that for every $\alpha>0$ and $f\in\BB_b(E)\cap H$
the resolvent of $\BM$, that is the function
\[
R_{\alpha}f(x)=E_x\int^{\infty}_0e^{-\alpha t}f(X_t)\,dt,\quad
x\in E,\,\alpha>0
\]
is a quasi-continuous $m$-version of $G_{\alpha}f$ (see
\cite[Proposition IV.2.8]{MR}).

It is known (see, e.g., \cite[Theorem VI.2.4]{MR}) that there is a
one to one correspondence (called Revuz correspondence) between
smooth measures $\mu$ and positive continuous additive functionals
(positive CAFs) $A$ of $\BM$. It is given by the following
relation
\begin{equation}
\label{eq2.15}
\lim_{t\rightarrow0^+}\frac1{t}E_m\int^t_0f(X_s)\,dA_s=\int_Ef(x)\,\mu(dx),
\quad f\in\BB^+(E),
\end{equation}
where $E_m$ denotes the expectation with respect to the measure
$P_m(\cdot)=\int_EP_x(\cdot)\,m(dx)$. In what follows the positive
CAF of $\BM$ corresponding to $\mu\in S(E)$ will be denoted by
$A^{\mu}$.

For $\mu\in S(E)$ we set
\[
R\mu(x)=E_x\int^{\zeta}_0dA^{\mu}_t,\quad x\in E
\]
and
\[
\mathcal{R}(E)=\{\mu:|\mu|\in S(E), R|\mu|<\infty\mbox{
$m$-a.e.}\},
\]
where $|\mu|$ denotes the total variation of $\mu$. Note that by
\cite[Lemma 2.3]{KR:CM}, in the above definition of the class
$\RR(E)$ one can replace $m$-a.e. by q.e. By
$\mathcal{M}_{0,b}(E)$ we denote the space of all signed measures
$\mu$ on $E$ such that $|\mu|\in S(E)$ and $|\mu|(E)<\infty$. By
\cite[Proposition 3.2]{KR:CM}, if $(\EE,D(\EE))$ is transient then
$\MM_{0,b}(E)\subset\RR(E)$.

\paragraph{Time dependent Dirichlet forms.}

Let us consider the time dependent Dirichlet form $\EE$ defined by
(\ref{eq2.23}).  Then by \cite[Theorem 6.3.1]{O3} there exists a
Hunt process
$\BBM=(\Omega,(\FF_t)_{t\ge0},(\BBX_t)_{t\ge0},\zeta,(P_z)_{z\in
E^1\cup\{\Delta\}})$  with state space $E^1$, life time $\zeta$
and cemetery state $\Delta$ properly associated with $\EE$ in the
sense that for every $\alpha>0$ and $f\in\BB_b\cap L^2(E^1;m_1)$
the resolvent of $\BBM$ defined as
\[
R_{\alpha}f(z)=E_z\int^{\infty}_0e^{-\alpha t}f(\BBX_t)\,dt,\quad
x\in E,\,\alpha>0
\]
is a quasi-continuous version of the resolvent $G_{\alpha}f$
associated with $\EE$. Moreover, by \cite[Theorem 6.3.1]{O3},
\[
\BBX_{t}=(\tau(t), X_{\tau(t)}), \quad t\ge 0,
\]
where $\tau(t)$ is the uniform motion to the right, i.e.
$\tau(t)=\tau(0)+t$, $\tau(0)=s$, $P_{z}$-a.s. for $z=(s,x)$.

Let $S(E^1)$ denote the set of smooth measures on $E^1$ (with
respect to $\EE$),  which we define analogously to $S(E)$ (see,
e.g., \cite{K:JFA,S} for details).  We say that a positive AF $A$
of $\BBM$ is in the Revuz correspondence with $\mu\in S(E^1)$ if
\[
\lim_{\alpha\rightarrow\infty}\alpha
E_{m_1}\int^{\infty}_0e^{-\alpha t}
f(\BBX_t)\,dA_t=\int_{E^1}f(z)\,\mu(dz),\quad f\in\BB_b^+(E^1),
\]
where $E_{m_1}$ denotes the expectation with respect to
$P_{m_1}(\cdot)=\int_{E^1}P_z(\cdot)\,m_1(dz)$ (see \cite{O1,R}).

It is known (see \cite[Section 2]{K:JFA}) that for every $\mu\in
S(E^1)$ there exists a unique positive  natural AF $A$ of $\BBM$,
i.e. a positive AF of $\BBM$ such that $A$ and $\BBM$ have no
common discontinuities, such that $A$ is in the Revuz
correspondence with $\mu$. In what follows we will denote it by
$A^{\mu}$. In fact, $A^{\mu}$ is a predictable process (see
\cite{GS}). On the contrary, if $A$ is a positive natural AF of
$\BBM$ then by Proposition in Section II.1 of \cite{R} and
\cite[Theorem 5.6]{O1} there exists a smooth measure $\mu$ on
$E^1$ such that $A$ is in the Revuz correspondence with $\mu$.

Let $S(E_{0,T})$  denote the set of all $\mu\in S(E^1)$ with
support in $E_{0,T}$ and for $\mu\in S(E_{0,T})$ let
\[
R^{0,T}\mu(z)=E_z\int_0^{\zeta_{\tau}}dA^{\mu}_t,\quad z\in
E_{0,T},
\]
where
\begin{equation}
\label{eq2.10} \zeta_{\tau}=\zeta\wedge(T-\tau(0)).
\end{equation}
We set
\[
\mathcal{R}(E_{0,T})=\{\mu:|\mu|\in S(E_{0,T}),
R^{0,T}|\mu|<\infty \mbox{ $m_1$-a.e.}\}
\]
and by $\mathcal{M}_{0,b}(E_{0,T})$ we denote the space of all
signed measures $\mu$ on $E^1$ such that $|\mu|\in S(E_{0,T})$ and
$|\mu|(E^1)<\infty$. Note that by \cite[Proposition 3.8]{K:JFA},
$\MM_{0,b}(E_{0,T})\subset\RR(E_{0,T})$.

\section{Elliptic equations} \label{sec3}

Let $(\EE,D(\EE))$ be a regular Dirichlet form on $H$. We consider
the problem
\begin{equation}
\label{eq3.01} -Lu=f_u+\mu,
\end{equation}
where $f:E\times\BR\rightarrow\BR$ is a measurable function,
$f_u=f(\cdot,u)$, $\mu\in\RR(E)$ and $L$ is the operator
associated with $(\EE,D(\EE))$, i.e.
\[
D(L)=\{u\in D(\EE):v\mapsto\EE(u,v)\mbox{ is continuous with
respect to $(\cdot,\cdot)^{1/2}_H$ on $D(\EE)$}\}
\]
and
\begin{equation}
\label{eq1.6} (-Lu,v)_H=\EE(u,v),\quad u\in D(L),v\in D(\EE)
\end{equation}
(see \cite[Proposition I.2.16]{MR}).

In what follows  $\BM$ is the Markov process of Section \ref{sec2}
associated with $(\EE,D(\EE))$. Let us recall that a c\`adl\`ag
adapted (with respect to $(\FF_t)$) process $Y$ is said to be of
class (D)  if the collection $\{Y_{\tau},\tau$ is a finite
$(\FF_t)$-stopping time\} is uniformly integrable.

\begin{definition}
Let $f:E\times\BR\rightarrow\BR$ be a measurable function and let
$A^{\mu}$ be a CAF of $\BM$ corresponding to some
$\mu\in\mathcal{R}(E)$. We say that a pair $(Y^x,M^x)$ is a
solution of the backward stochastic differential equation
\begin{equation}
\label{eq3.2}
Y^x_t=Y^x_{T\wedge\zeta}+\int^{T\wedge\zeta}_{t\wedge\zeta}f(X_s,Y^x_s)\,ds
+\int^{T\wedge\zeta}_{t\wedge\zeta}dA^{\mu}_s
-\int^{T\wedge\zeta}_{t\wedge\zeta}dM^x_s, \quad t\ge0
\end{equation}
under the measure $P_x$ if
\begin{enumerate}
\item[(a)]$Y^x$ is an $(\FF_t)$-progressively measurable c\`adl\`ag
process such that $Y^x_{t\wedge\zeta}\rightarrow0$, $P_x$-a.s. as
$t\rightarrow\infty$, $Y^x$ is of class (D) under $P_x$ and $M^x$
is a c\`adl\`ag $(\FF_t)$-local martingale under $P_x$,
\item[(b)]For every $T>0$, $[0,T]\ni t\mapsto f(X_t,Y^x_t)\in L^1(0,T)$ and
(\ref{eq3.2}) is  satisfied $P_x$-a.s.
\end{enumerate}
\end{definition}

The following definition is taken from \cite{KR:JFA,KR:CM}.
\begin{definition}
Let $\mu\in\RR(E)$. We say that a quasi-continuous function
$u:E\rightarrow\BR$ is a probabilistic solution to (\ref{eq1.1})
if $f_u\cdot m\in\RR(E)$ and for q.e. $x\in E$,
\begin{equation}
\label{eq3.1} u(x)=E_x\Big(\int^{\zeta}_0f_u(X_t)\,dt
+\int^{\zeta}_0dA^{\mu}_t\Big).
\end{equation}
\end{definition}

\begin{remark}
\label{rem3.1} (i) The quasi-continuity requirement on $u$ in the
above definition can be omitted, because  if $\mu,f_u\cdot
m\in\RR(E)$ then from the  very definition of the class $\RR(E)$
it follows that  the right-hand side of (\ref{eq3.1}) is finite
for $m$-a.e. $x\in E$, and, in consequence, it is a
quasi-continuous function of $x$ (see \cite[Lemma 4.3]{KR:JFA} and
\cite[Lemma 2.3]{KR:CM}).
\smallskip\\
(ii) If $u$ is a probabilistic solution to (\ref{eq1.1}) then
there exists a martingale additive functional (MAF) $M$ of $\BM$
such that $M$ is a martingale under $P_x$ for q.e. $x\in E$ and
for q.e. $x\in E$ the pair
\[
(Y_t,M_t)=(u(X_t),M_t),\quad t\ge0
\]
is a solution of (\ref{eq3.2}) under $P_x$. Indeed, with our
convention (see the beginning of Section \ref{sec2.3}),
\[
u(x)=E_x\Big(\int^{\infty}_0f_u(X_t)\,dt
+\int^{\infty}_0dA^{\mu}_t\Big).
\]
Set
\[
M^x_t=E_x\Big(\int^{\zeta}_0f_u(X_s)\,ds
+\int^{\zeta}_0dA^{\mu}_s|\FF_{t\wedge\zeta}\Big)-u(X_0),\quad
t\ge0.
\]
By \cite[Lemma A.3.6]{FOT} there exists a  MAF $M$ od $\BM$ such
that $M^x_t=M_t$, $t\ge0$, $P_x$-a.s. for q.e. $x\in E$. Therefore
\begin{equation}
\label{eq3.04}
M_t=M_{t\wedge\zeta}=E_x\Big(\int^{\zeta}_0f_u(X_s)\,ds
+\int^{\zeta}_0dA^{\mu}_s|\FF_{t\wedge\zeta}\Big)-u(X_0),\quad
t\ge0
\end{equation}
under $P_x$ for q.e. $x\in E$. By the strong Markov property,
under $P_x$  we have
\begin{align*}
M_{t\wedge\zeta}&=\int^{t\wedge\zeta}_0(f_u(X_s)\,ds +dA^{\mu}_s)
+E_x\Big(\int^{\zeta}_{t\wedge\zeta}(f_u(X_s)\,ds
+dA^{\mu}_s)|\FF_{t\wedge\zeta}\Big)-u(X_0)\\
&=\int^{t\wedge\zeta}_0(f_u(X_s)\,ds +dA^{\mu}_s)
+E_{X_{t\wedge\zeta}}\int^{\zeta}_0(f_u(X_s)\,ds
+dA^{\mu}_s)-u(X_0)\\
&=\int^{t\wedge\zeta}_0(f_u(X_s)\,ds
+dA^{\mu}_s)+u(X_{t\wedge\zeta})-u(X_0)
\end{align*}
for q.e. $x\in E$. Hence
\[
u(X_{t\wedge\zeta})-u(X_{T\wedge\zeta})
=\int^{T\wedge\zeta}_{t\wedge\zeta}(f_u(X_s)\,ds +dA^{\mu}_s)
-\int^{T\wedge\zeta}_{t\wedge\zeta}dM_s,
\]
which shows (\ref{eq3.2}). Taking  $t=0$ in the above equality and
using (\ref{eq3.04}) we get
\[
u(X_{T\wedge\zeta})=-\int^{T\wedge\zeta}_0(f_u(X_s)\,ds
+dA^{\mu}_s)+E_x\Big(\int^{\zeta}_0f_u(X_s)\,ds
+\int^{\zeta}_0dA^{\mu}_s|\FF_{T\wedge\zeta}\Big).
\]
It follows that for q.e. $x\in E$,
$u(X_{T\wedge\zeta})\rightarrow0$, $P_{x}$-a.s. as
$T\rightarrow\infty$.
\end{remark}

In what follows we assume that $(\EE,D(\EE))$ is transient and
satisfies the strong sector condition. For a measure $\mu$ on $E$
and a function $u:E\rightarrow\BR$ we use the notation
\[
\langle\mu,u\rangle=\int_Eu(x)\,\mu(dx)
\]
whenever the integral is well defined.

We adopt the following definition of renormalized solution of
(\ref{eq1.1}). In the case of local operators, this is essentially
\cite[Definition 2.29]{DMOP}.

\begin{definition}
\label{def3.8} Let $\mu\in\MM_{0,b}(E)$. We say that
$u:E\rightarrow\BR$ is a renormalized solution of (\ref{eq1.1}) if
\begin{enumerate}
\item[(a)] $u$ is quasi-continuous, $f_u\in L^1(E;m)$
and $T_ku\in D_e(\EE)$ for every $k>0$,
\item[(b)] there exists a sequence
$\{\nu_k\}\subset\MM_{0,b}(E)$ such that
$\|\nu_{k}\|_{TV}\rightarrow0$ as $k\rightarrow\infty$ and for
every $k\in\BN$ and every bounded $v\in D_e(\EE)$,
\begin{equation}
\label{eq4.1} \EE(T_ku,v) =\langle f_u\cdot m+\mu,\tilde v\rangle+
\langle\nu_{k},\tilde v\rangle.
\end{equation}
\end{enumerate}
\end{definition}

\begin{theorem}
\label{th3.5} Assume that $(\EE,D(\EE))$ is transient, satisfies
the strong sector condition and that $\mu\in\MM_{0,b}(E)$.
\begin{enumerate}
\item[\rm(i)]If $u$ is a probabilistic solution of \mbox{\rm(\ref{eq1.1})}
and $f_u\in L^1(E;m)$ then $u$ is a renormalized solution of
\mbox{\rm(\ref{eq1.1})}.
\item[\rm(ii)]If $u$ is a renormalized solution of
\mbox{\rm(\ref{eq1.1})} then $u$ is a probabilistic solution of
\mbox{\rm(\ref{eq1.1})}.
\end{enumerate}
\end{theorem}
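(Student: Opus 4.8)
The plan is to treat the two implications by exploiting the single identification: a probabilistic solution is exactly the $\EE$-potential $R\mu_0$ of the bounded smooth measure $\mu_0:=f_u\cdot m+\mu$. Since $f_u\in L^1(E;m)$ gives $f_u\cdot m\in\MM_{0,b}(E)$ and $\mu\in\MM_{0,b}(E)$, transience yields $\mu_0\in\MM_{0,b}(E)\subset\RR(E)$, so formula (\ref{eq3.1}) reads $u=R\mu_0$ q.e., and quasi-continuity of $u$ is automatic by Remark \ref{rem3.1}(i). The whole proof then amounts to understanding how truncation $T_k$ acts on the potential $R\mu_0$ at the level of the associated Dirichlet form.

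For part (i) I would first produce $T_ku\in D_e(\EE)$ by approximation. Choose measures $\mu_0^n\in S^{(0)}_0(E)$ (indeed in $\hat S^{(0)}_{00}(E)$) with $\mu_0^n\to\mu_0$ in total variation, so that $u_n:=R\mu_0^n\in D_e(\EE)$ and $u_n\to u$ q.e. Because $T_k$ is a normal contraction, $T_ku_n\in D_e(\EE)$ with $\tilde\EE(T_ku_n,T_ku_n)$ bounded: testing the potential equation against $T_ku_n$ itself and using the strong sector condition gives $\EE(T_ku_n,T_ku_n)=\langle\mu_0^n+\nu_k^n,\widetilde{T_ku_n}\rangle\le C k(\|\mu_0^n\|_{TV}+\|\nu_k^n\|_{TV})$, where $\nu_k^n$ is the (signed) measure representing the defect $\EE(T_ku_n,\cdot)-\langle\mu_0^n,\cdot\rangle$ produced by the truncation. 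Since $(D_e(\EE),\tilde\EE)$ is a Hilbert space, a uniform energy bound plus $q.e.$ convergence gives $T_ku_n\to T_ku$ in $\tilde\EE$, whence $T_ku\in D_e(\EE)$ and, passing to the limit in the identity for $u_n$, the renormalized equation (\ref{eq4.1}) holds with $\nu_k:=\lim_n\nu_k^n\in\MM_{0,b}(E)$.

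The main obstacle is the decay $\|\nu_k\|_{TV}\to0$. Here I would identify $\nu_k$ probabilistically through the Revuz correspondence: applying the Itô–Meyer/Fukushima decomposition to $T_ku(\BBX)$ — equivalently to $T_k$ composed with the semimartingale $u(X_t)$ whose finite-variation part is $-A^{\mu_0}$ — the correction to the naive equation is a positive continuous (and, for nonlocal $\EE$, jump-overshoot) additive functional supported on the excursions of $u(X)$ through the levels $\pm k$, and $\nu_k$ is its Revuz measure. One then estimates $\|\nu_k\|_{TV}$ by the expected variation of $A^{\mu_0}$ and of the martingale part carried on $\{|u|\ge k\}$, i.e. essentially by $\int_{\{|u|\ge k\}}|f_u|\,dm+|\mu|(\{|u|\ge k\})$ plus the jump contributions; since $u=R\mu_0$ is $m$-a.e. finite, $\mu_0$ is of bounded variation, and $u(X_{t\wedge\zeta})$ is of class (D) tending to $0$, each term vanishes as $k\to\infty$. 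Controlling the jump-overshoot part for genuinely nonlocal forms is the delicate point and is where I expect the real work to lie.

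For part (ii) I would run this in reverse. Given the renormalized data, (\ref{eq4.1}) says precisely that $T_ku$ is the $\EE$-potential of $\beta_k:=f_u\cdot m+\mu+\nu_k\in\MM_{0,b}(E)\subset\RR(E)$; invoking the potential-theoretic uniqueness from \cite{KR:JFA,KR:CM} (a bounded quasi-continuous $w\in D_e(\EE)$ with $\EE(w,v)=\langle\beta,\tilde v\rangle$ for all bounded $v\in D_e(\EE)$ equals $R\beta$ q.e.) gives $T_ku(x)=E_x\big(\int_0^\zeta f_u(X_s)\,ds+\int_0^\zeta dA^\mu_s+\int_0^\zeta dA^{\nu_k}_s\big)$ for q.e. $x$. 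Finally I let $k\to\infty$: the left side converges to $u(x)$ q.e., and since $R:\MM_{0,b}(E)\to$ functions is continuous in total variation (so $E_\cdot\!\int_0^\zeta dA^{\nu_k}\to0$ in $L^1(m)$, hence q.e. along a subsequence), the right side converges to the right-hand side of (\ref{eq3.1}). This yields the Feynman–Kac representation and, with $f_u\cdot m\in\RR(E)$ already noted, shows $u$ is a probabilistic solution. The obstacle in this direction is purely the justification that $\|\nu_k\|_{TV}\to0$ forces $E_xA^{\nu_k}_\zeta\to0$ q.e., which I would secure by the continuity of the copotential map on $\MM_{0,b}(E)$ together with a subsequence argument.
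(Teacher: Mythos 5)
Your overall skeleton does track the paper's proof (Meyer--It\^o plus the Revuz correspondence for (i); identification of $T_ku$ as a duality/potential solution of the linear problem \mbox{\rm(\ref{eq4.7})} for (ii)), but two of your load-bearing steps are genuinely gapped. The most serious one is in (ii): you invoke ``continuity of $R$ (resp.\ of the copotential map) on $\MM_{0,b}(E)$ in total variation'' to get $R\nu_k\rightarrow0$ in $L^1(E;m)$, but this is false in the stated generality. One has $\int_F R|\nu_k|\,dm=\langle|\nu_k|,\widetilde{\hat U(\fch_F\cdot m)}\rangle$ only when $\fch_F\cdot m\in S^{(0)}_0(E)$, and a bound $\|R\nu_k\|_{L^1(E;m)}\le C\|\nu_k\|_{TV}$ would require the copotential of all of $m$ to be bounded, which transience does not provide ($m$ need not even be finite, and $Gf<\infty$ $m$-a.e.\ is all one has). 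This is exactly where the paper does its real work: it first proves a maximum-principle observation (if $u=\hat U\mu$ with $\mu\in S^{(0)}_0(E)$ and $\tilde u\le c$ $\mu$-a.e., then $u\le c$ $m$-a.e., via \mbox{\rm(\ref{eq3.18})}), then uses the $0$-order version of \cite[Theorem 2.2.4]{FOT} to produce a generalized nest $\{F_n\}$ with $\fch_{F_n}\cdot m\in S^{(0)}_0(E)$ and truncated sets $F_{n,N}$ on which $\|\hat U\mu_{n,N}\|_\infty\le N$, giving $\int_{F_{n,N}}R|\nu_k|\,dm\le N\|\nu_k\|_{TV}$ as in \mbox{\rm(\ref{eq3.19})} and hence \mbox{\rm(\ref{eq3.26})} $m$-a.e.\ along a subsequence. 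Without this localization (or a substitute), your passage $k\rightarrow\infty$ in \mbox{\rm(\ref{eq3.20})} is unjustified; note that the analogous step is cheap only in the parabolic case, where $\hat G^{0,T}\fch_F\le T$.

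In (i) your TV-approximation thread ($\mu_0^n\in\hat S^{(0)}_{00}(E)$, $\nu_k:=\lim_n\nu_k^n$) is both unnecessary and unsound as written: the existence of the ``defect measure'' $\nu_k^n$ is precisely what must be proved, your energy bound on $T_ku_n$ presupposes control of $\|\nu_k^n\|_{TV}$ (circular), and a TV-bounded sequence of smooth measures can converge weakly to a measure charging polar sets, so neither smoothness of the limit nor convergence against merely quasi-continuous test functions $\tilde v$ is secured. The paper instead works directly with $u$: Meyer--It\^o applied to $Y=u(X)$ produces increasing processes $A^{1,k},A^{2,k}$ whose expected total masses are bounded in \mbox{\rm(\ref{eq3.15})}, \mbox{\rm(\ref{eq3.22})} by integrals of $|f_u|\,dm+d|\mu|$ over $\{|u|>k\}$ --- the martingale part and the jump overshoot contribute nothing here, being killed in expectation --- and then \cite[Theorems A.3.16, A.3.2, A.3.5]{FOT} give \emph{continuous} compensators $B^{i,k}$ (the process has no predictable jumps), whose Revuz measures $b^i_k$ inherit the TV bounds even for nonlocal forms; so the ``jump-overshoot'' difficulty you flag as the real work in fact evaporates, and $\nu_k=-\fch_{\{u\notin(-k,k]\}}(f_u\cdot m+\mu)+b^1_k-b^2_k$ is a signed combination, not the Revuz measure of a single positive AF. Finally, you are missing the step converting the potential identity $T_ku=R\lambda_k$ of \mbox{\rm(\ref{eq3.20})} into the variational identity \mbox{\rm(\ref{eq4.1})}: the paper derives the generalized resolvent equation $R\lambda_k-R_\alpha\lambda_k=\alpha R_\alpha(R\lambda_k)$, passes to the limit $\alpha\rightarrow\infty$ in \mbox{\rm(\ref{eq3.21})} using $\alpha\hat R_\alpha v\rightarrow\tilde v$ q.e., and only then extends from bounded $v\in D(\EE)$ to bounded $v\in D_e(\EE)$; it also imports $T_ku\in D_e(\EE)$ from \cite[Theorem 4.2]{KR:CM} rather than re-deriving it. Your plan supplies no equivalent of this bridge.
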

\begin{proof}
(i)  Let $u$ be a probabilistic solution of (\ref{eq1.1}) and let
$M$ be the martingale of Remark \ref{rem3.1}(ii). For $k>0$ put
\[
Y_t=u(X_t),\quad Y^k_t=T_ku(X_t),\quad t\ge0.
\]
From the fact that $(Y,M)$ is a solution of (\ref{eq3.2}) it
follows that
\begin{equation}
\label{eq4.2}
Y_t=Y_{t\wedge\zeta}=Y_0-\int^{t\wedge\zeta}_0f(X_s,Y_s)\,ds
-\int^{t\wedge\zeta}_0dA^{\mu}_s +\int^{t\wedge\zeta}_0dM_s, \quad
t\ge0.
\end{equation}
By the Meyer-It\^o formula (see, e.g., \cite[Theorem IV.70]{Pr}),
\begin{equation}
\label{eq4.02} (u\wedge k)(X_t)-(u\wedge k)(X_0)
=\int^t_0\fch_{\{Y_{s-}\le k\}}\,dY_s-A^{1,k}_t
\end{equation}
and
\begin{equation}
\label{eq4.05} (u(X_t)+k)\wedge0-(u(X_0)+k)\wedge0=
\int^t_0\fch_{\{Y_{s-}\le -k\}}\,dY_s-A^{2,k}_t
\end{equation}
for some increasing processes $A^{1,k},A^{2,k}$. Since
$T_ky=y\wedge k-((y+k)\wedge0)$ for $y\in\BR$, it follows from
(\ref{eq4.02}) and  (\ref{eq4.05})  that
\begin{equation}
\label{eq4.06} Y^k_t-Y^k_0=\int^t_0\fch_{\{-k<Y_{s-}\le k\}}\,dY_s
-(A^{1,k}_t-A^{2,k}_t).
\end{equation}
From (\ref{eq4.02}), it follows immediately that $A^{1,k},A^{2,k}$
are AFs of $\BM$. Since $u$ is a probabilistic solution,
$u(X_t)\rightarrow0$, $P_x$-a.s. as $t\rightarrow\infty$ for q.e.
$x\in E$. Therefore from (\ref{eq4.02}) and continuity of
$A^{\mu}$ we conclude that for q.e. $x\in E$,
\[
E_xA^{1,k}_{\zeta}=E_x(u\wedge k)(X_0)
-E_x\int^{\zeta}_0\fch_{\{Y_{s}\le k\}}(f_u(X_s)\,ds +dA^{\mu}_s).
\]
Since $E_x(u\wedge k)(X_0)=(u\wedge k)(x)\le u(x)$ and $u$ is a
probabilistic solution of (\ref{eq1.1}), it follows from the above
that
\begin{equation}
\label{eq3.15}  E_xA^{1,k}_{\zeta}\le
E_x\int^{\zeta}_0\fch_{\{Y_{s}>k\}}(f_u(X_s)\,ds +dA^{\mu}_s).
\end{equation}
Similarly, by  (\ref{eq4.05}) we have
\begin{equation}
\label{eq3.22} E_xA^{2,k}_{\zeta}\le
-E_x\int^{\zeta}_0\fch_{\{Y_s\le-k\}}(f_u(X_s)\,ds +dA^{\mu}_s).
\end{equation}
It follows that for q.e. $x\in E$,
$E_x(A^{1,k}_\zeta+A^{2,k}_\zeta)<\infty$. Therefore by
\cite[Theorem A.3.16]{FOT} there exists AFs $B^{1,k},B^{2,k}$ of
$\BM$ such that $B^{i,k}$ is a compensator of $A^{i,k}$, $i=1,2$
under $P_x$ for q.e. $x\in E$. Since $A^{1,k}, A^{2,k}$ are
increasing, $B^{1,k}, B^{2,k}$ are increasing, too. Furthermore,
since by \cite[Theorem A.3.2]{FOT} the process $X$ has no
predictable jumps, it follows from \cite[Theorem A.3.5]{FOT} that
$B^{1,k}, B^{2,k}$ are continuous. Thus $B^{1,k}, B^{2,k}$ are
increasing CAFs of $\BM$ such that $A^{i,k}-B^{i,k}$, $i=1,2$, are
martingales under $P_x$ for q.e. $x\in E$. Let $b^i_k$, $i=1,2$,
denote the Revuz measure of $B^{i,k}$. Since for q.e. $x\in E$,
$E_xA^{i,k}_t=E_xB^{i,k}_t$ for $t\ge0$, from (\ref{eq3.15}),
(\ref{eq3.22})  and \cite[Lemma 5.4]{KR:JFA} we conclude that
\[
b^1_k(E)\le
\|\fch_{\{u>k\}}f_u\|_{L^1(E;m)}+\|\fch_{\{u>k\}}\cdot\mu\|_{TV},
\]
\[
b^2_k(E)\le\|\fch_{\{u\le-k\}}f_u\|_{L^1(E;m)}
+\|\fch_{\{u\le-k\}}\cdot\mu\|_{TV}.
\]
Hence $b^1_k,b^2_k\in\MM_{0,b}(E)$ and
$\|b^1_k\|_{TV}\rightarrow0$, $\|b^2_k\|_{TV}\rightarrow0$ as
$k\rightarrow\infty$. Combining (\ref{eq4.2}) with (\ref{eq4.06})
we obtain
\begin{equation}
\label{eq4.19} Y^{k}_t-Y^{k}_0=-\int^t_0\fch_{\{-k<Y_{s-}\le
k\}}(f_u(X_s)\,ds+dA^{\mu}_s) -(B^{1,k}_t-B^{2,k}_t)+M^{k}_t
\end{equation}
with
\[
M^{k}_t=\int^t_0\fch_{\{-k<Y_{s-}\le k\}}
\,dM_s-(A^{1,k}_t-B^{1,k}_t) +A^{2,k}_t-B^{2,k}_t.
\]
From (\ref{eq4.19}) and the fact that $u(X_t)\rightarrow0$,
$P_x$-a.s. as $t\rightarrow\infty$ for q.e. $x\in E$ it follows
that for q.e. $x\in E$,
\begin{equation}
\label{eq3.20} T_{k}u(x)=E_x\Big(\int^{\zeta}_0(f_u(X_t)\,dt
+dA^{\mu}_t)+\int^{\zeta}_0dA^{\nu_k}_t\Big)
\end{equation}
with
\[
\nu_k=-\fch_{\{u\notin(-k,k]\}}(f_u\cdot m+\mu)+b^1_k-b^2_k.
\]
Clearly $\nu_k\in\MM_{0,b}(E)$ and $\|\nu_k\|_{TV}\rightarrow0$ as
$k\rightarrow\infty$. By \cite[Theorem 4.2]{KR:CM} (see also
\cite[Proposition 5.9]{KR:JFA} in the case of regular symmetric
forms), $T_{k}u\in D_e(\EE)$ for every $k>0$. Let
$\lambda_k=f_u\cdot m+\mu+\nu_k$ and let $A=A^{\lambda_k}$. Since
$\lambda_k\in\MM_{0,b}(E)$, $R|\lambda_k|(x)<\infty$ for q.e.
$x\in E$. By Fubini's theorem, for q.e. $x\in E$ we have
\begin{align*}
R{\lambda_k}(x)-R_{\alpha}\lambda_k(x)
=E_x\int^{\infty}_0(1-e^{-\alpha t})\,dA_t
&=E_x\int^{\infty}_0(\int^t_0\alpha e^{-\alpha s}\,ds)\,dA_t\\
&=\alpha E_x\int^{\infty}_0e^{-\alpha t}(\int_t^{\infty}dA_s)\,dt.
\end{align*}
By the Markov property the right-hand side of the above equality
equals
\begin{align*}
\alpha E_x\int^{\infty}_0e^{-\alpha t}(\int_0^{\infty}
d(A_s\circ\theta_t))\,dt &=\alpha
E_x\int^{\infty}_0e^{-\alpha t}E_{X_t}(\int^{\infty}_0dA_s)\,dt\\
&=\alpha E_x\int^{\infty}_0e^{-\alpha t}R\lambda_k(X_t)\,dt
=\alpha R_{\alpha}(R\lambda_k)(x).
\end{align*}
Hence
\[
R{\lambda_k}(x)-R_{\alpha}\lambda_k(x) =\alpha
R_{\alpha}(R\lambda_k)(x)
\]
for q.e. $x\in E$.
By (\ref{eq3.20}), $T_ku=R\lambda_k$. Therefore from the above
generalized resolvent equation it follows that for every bounded
$v\in D(\EE)$ we have
\begin{equation}
\label{eq3.21} \alpha(T_ku-\alpha R_{\alpha}(T_ku),v)_H =\alpha
(R_{\alpha}\lambda_k,v)_H.
\end{equation}
By \cite[Theorem 2.13(iii)]{MR} the left-hand side of
(\ref{eq3.21}) converges to $\EE(T_ku,v)$ as
$\alpha\rightarrow\infty$. The right-hand side is equal to
$\langle\lambda_k,\alpha\hat R_{\alpha}v\rangle$, where $\hat
R_{\alpha}$ denotes the resolvent of a Hunt process associated
with the form $(\hat\EE,D(\EE))$ defined as
$\hat\EE(u,v)=\EE(v,u)$, $u,v\in D(\EE)$. Since the functions
$\alpha\hat R_{\alpha}v$ are bounded uniformly in $\alpha>0$ and
by Propositions I.2.13(ii) and III.3.5 in \cite{MR} we may assume
that $\alpha\hat R_{\alpha}v\rightarrow\tilde v$ q.e. as
$\alpha\rightarrow\infty$, the right-hand side of (\ref{eq3.21})
converges to $\langle\lambda_k,\tilde v\rangle$ as
$\alpha\rightarrow\infty$. Thus (\ref{eq4.1}) is satisfied for
bounded $v\in D(\EE)$. Now assume that $v$ is bounded, say by $l$,
and $v\in D_e(\EE)$. Let $v_n$ be an approximating sequence for
$v$. Then $T_l(v_n)\rightarrow v$ $m$-a.e. and, by \cite[Corollary
1.6.3]{FOT}, in $(D_e(\EE),\tilde\EE)$ as $n\rightarrow\infty$.
Taking a subsequence if necessary we may assume that
$T_lv_n\rightarrow\tilde v$ q.e. By what has already been proved,
for $n\in\BN$ we have
\[
\EE(T_ku,T_lv_n)=\langle \lambda_k,T_l\tilde v_n\rangle.
\]
Letting $n\rightarrow\infty$ we get (\ref{eq4.1}), which completes
the proof of (i).
\smallskip\\
(ii) If  $u$ is a renormalized solution of (\ref{eq1.1}) then $u$
is quasi-continuous and  (\ref{eq4.1}) is satisfied for all
functions $v$ of the form $v=\hat U\nu$ with $\nu\in\hat
S^{(0)}_{00}(E)$. Hence
\[
\langle\nu,T_ku\rangle=\EE(T_ku,\hat U\nu)=\langle f_u\cdot m+\mu+
\nu_k,\widetilde{\hat U\nu}\rangle.
\]
Therefore $T_{k}u$ is a solution in the sense of duality (see
\cite[Section 3.3]{KR:CM} or \cite[Section 5]{KR:JFA} for the
definition) of the linear problem
\begin{equation}
\label{eq4.7} -L(T_{k}u)=f_u +\mu+\nu_{k}.
\end{equation}
By \cite[Proposition 3.9]{KR:CM} (or \cite[Proposition
5.3]{KR:JFA} in the case of symmetric forms) $T_{k}u$ is a
probabilistic solution of (\ref{eq4.7}). In particular
(\ref{eq3.20}) (with the measure $\nu_{k}$ of (\ref{eq4.7})) is
satisfied. Since $\|\nu_k\|_{TV}\rightarrow0$ as
$k\rightarrow\infty$, there is a subsequence (still denoted by
$k$) such that
\begin{equation}
\label{eq3.26}
R\nu_{k}(x)=E_x\int^{\zeta}_0dA^{\nu_{k}}_t\rightarrow0
\end{equation}
for $m$-a.e. $x\in E$. To see this, let us first observe that if
$\mu\in S^{(0)}_0(E)$ and $\tilde u\le c$ $\mu$-a.e., where
$u=\hat U\mu$, then $u\le c$ $m$-a.e. Indeed, we have
\[
\EE(u\wedge c,u)=\EE(u\wedge c,\hat U\mu)=\int_E(\tilde u\wedge
c)\,\mu(dx) =\int_E\tilde u\,\mu(dx)=\EE(u,u).
\]
Hence
\begin{equation}
\label{eq3.18} \EE(u-u\wedge c,u-u\wedge c)=\EE(u-u\wedge
c,u)-\EE(u-u\wedge c,u\wedge c)\le0,
\end{equation}
the last inequality being a consequence of \cite[Theorem
I.4.4]{MR} and the fact that $\EE$ is a Dirichlet form. By
(\ref{eq3.18}) and \cite[Theorem 1.6.2]{FOT}, $u-u\wedge c=0$
$m$-a.e., which shows that $u\le c$ $m$-a.e. Since $m\in S(E)$, by
the 0-order version of \cite[Theorem 2.2.4]{FOT} (see the comment
following \cite[Corollary 2.2.2]{FOT}) there exists a generalized
nest $\{F_n\}$ such that $\mu_n:=\fch_{F_n}\cdot m\in
S^{(0)}_0(E)$ for $n\in\BN$ and
$m(E\setminus\bigcup^{\infty}_{n=1}F_n)=0$. Let $F_{n,N}=\{x\in
F_n:\widetilde{\hat U\mu_n}(x)\le N\}$ and
$\mu_{n,N}=\fch_{F_{n,N}}\cdot\mu_n=\fch_{F_{n,N}}\cdot m$. Then
$\mu_{n,N}\in S^{(0)}_0(E)$ and $\widetilde{\hat U\mu_{n,N}} \le
\widetilde{\hat U\mu_n}\le N$ $\mu_n$-a.e. Therefore by the
observation made above, $\hat U\mu_{n,N}\le N$ $m$-a.e., and hence
$\widetilde{\hat U\mu_{n,N}}\le N$ q.e.  Moreover,
\[
\int_{F_{n,N}}R|\nu_k|(x)\,m(dx)=\EE(R|\nu_k|,\hat
U\mu_{n,N})=\langle|\nu_k|,\widetilde{\hat U\mu_{n,N}}\rangle \le
\|\nu_k\|_{TV}\|\hat U\mu_{n,N}\|_{\infty}.
\]
Hence for every $n,N\in\BN$,
\begin{equation}
\label{eq3.19}
\lim_{k\rightarrow\infty}\int_{F_{n,N}}R|\nu_k|(x)\,m(dx)=0.
\end{equation}
Since $\widetilde{\hat U\mu_n}$ is q.e. finite,
$m(F_n\setminus\bigcup_{N=1}^{\infty}F_{n,N})=m(\{x\in
F_n:\widetilde{\hat U\mu_n}(x)=\infty\}=0$. Therefore from
(\ref{eq3.19}) one can deduce that (\ref{eq3.26}) holds for
$m$-a.e. $x\in F_n$ for each $n\in\BN$. Since $m(E\setminus
\bigcup^{\infty}_{n=1}F_n)=0$, we see that (\ref{eq3.26}) holds
for $m$-a.e. $x\in E$. Letting $k\rightarrow\infty$ in
(\ref{eq3.20}) and using (\ref{eq3.26}) we conclude that
(\ref{eq3.1}) holds true for $m$-a.e. $x\in E$. In fact, since $u$
and the right-hand side of (\ref{eq3.1}) are quasi-continuous,
(\ref{eq3.1}) holds for q.e. $x\in E$, which completes the proof.
\end{proof}
\medskip


To illustrate the utility of Theorem \ref{th3.5} we now give some
results on existence and uniqueness of renormalized solutions of
(\ref{eq1.1}) with $f$ satisfying the monotonicity condition and
mild integrability conditions. To state the results we will need
the following hypotheses.

\begin{enumerate}
\item[(E1)]$f:E\times\BR\rightarrow\BR$ is measurable  and
$y\mapsto f(x,y)$ is continuous for every $x\in E$,
\item [(E2)]$(f(x,y_{1})-f(x,y_{2}))(y_{1}-y_{2})\le 0$ for every
$y_{1}, y_{2}\in \BR$ and $x\in E$,
\item [(E3)]$\mu\in\MM_{0,b}(E)$ and
$f(\cdot,y)\in L^1(E;m)$ for every $y\in\BR$.
\end{enumerate}

In what follows we assume that $(\EE,D(\EE))$ satisfies the
assumptions of Theorem \ref{th3.5}.
\begin{theorem}
Let $u_{1}, u_{2}$ be renormalized solutions of
\mbox{\rm(\ref{eq1.1})}  with the data $(f^{1},\mu_{1})$ and
$(f^{2},\mu_{2})$, respectively. Assume that $\mu_{1}\le\mu_{2}$
and either that $f^{1}(x,u_{1}(x))\le f^{2}(x,u_{1}(x))$ $m$-a.e.
and $f^{2}$ satisfies \mbox{\rm(E2)} or $f^{1}(x,u_{2}(x))\le
f^{2}(x,u_{2}(x))$ $m$-a.e. and $f^{1}$ satisfies \mbox{\rm(E2)}.
Then $u_{1}(x)\le u_{2}(x)$ for q.e. $x\in E$.
\end{theorem}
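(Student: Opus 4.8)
The plan is to reduce the comparison statement to a comparison principle for the associated backward stochastic differential equations, exploiting the equivalence established in Theorem \ref{th3.5}. Since $u_1,u_2$ are renormalized solutions, Theorem \ref{th3.5}(ii) shows that they are probabilistic solutions of (\ref{eq1.1}) with data $(f^1,\mu_1)$ and $(f^2,\mu_2)$. By Remark \ref{rem3.1}(ii) there are martingale additive functionals $M^1,M^2$ of $\BM$ such that, for q.e.\ $x\in E$, the pairs $(Y^i_t,M^i_t)=(u_i(X_t),M^i_t)$ solve (\ref{eq3.2}) under $P_x$; in particular the differential form of (\ref{eq4.2}) reads $dY^i_s=-f^i(X_s,Y^i_s)\,ds-dA^{\mu_i}_s+dM^i_s$ on $[0,\zeta)$. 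I set $Z=Y^1-Y^2$ and $M=M^1-M^2$, so that $Z_{t\wedge\zeta}\to0$ $P_x$-a.s.\ as $t\to\infty$, and the goal becomes to prove $Z^+_0=(u_1(x)-u_2(x))^+=0$ for q.e.\ $x$.

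The heart of the argument is the Tanaka--Meyer--It\^o formula applied to the convex function $y\mapsto y^+$. Applied to the semimartingale $Z$, it yields, for $0\le t\le T$,
\begin{equation*}
Z^+_{t\wedge\zeta}=Z^+_{T\wedge\zeta}-\int_{t\wedge\zeta}^{T\wedge\zeta}\fch_{\{Z_{s-}>0\}}\,dZ_s-\tfrac12\big(L^0_{T\wedge\zeta}-L^0_{t\wedge\zeta}\big)-J_{t,T},
\end{equation*}
where $L^0\ge0$ is the local time of $Z$ at $0$ and $J_{t,T}\ge0$ collects the jump correction terms, both nonnegative by convexity. Substituting the dynamics of $Z$, the term $-\int\fch_{\{Z_{s-}>0\}}\,dZ_s$ splits into a Lebesgue-drift part, a measure part $-\int\fch_{\{Z_{s-}>0\}}(dA^{\mu_1}_s-dA^{\mu_2}_s)$, and a martingale part $-\int\fch_{\{Z_{s-}>0\}}\,dM_s$. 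Because $\mu_1\le\mu_2$, the functional $A^{\mu_2}-A^{\mu_1}$ is increasing, so the measure part is nonpositive. For the drift I use, in the first case, the decomposition
\begin{equation*}
f^1(X_s,Y^1_s)-f^2(X_s,Y^2_s)=\big(f^1(X_s,u_1(X_s))-f^2(X_s,u_1(X_s))\big)+\big(f^2(X_s,u_1(X_s))-f^2(X_s,Y^2_s)\big);
\end{equation*}
the first bracket is $\le0$ $m$-a.e., hence $ds$-a.e.\ along the trajectory for q.e.\ starting point, while on $\{Z_s>0\}$ the monotonicity condition (E2) for $f^2$ forces the second bracket to be $\le0$. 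Thus on $\{Z_{s-}>0\}$ the whole drift is nonpositive; the second case is identical, splitting instead through $f^1(X_s,Y^2_s)$ and using (E2) for $f^1$.

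Discarding the nonnegative terms $L^0$, $J_{t,T}$ and the nonpositive drift and measure contributions, I obtain $Z^+_{t\wedge\zeta}\le Z^+_{T\wedge\zeta}-\int_{t\wedge\zeta}^{T\wedge\zeta}\fch_{\{Z_{s-}>0\}}\,dM_s$. Taking $t=0$, applying $E_x$, and first showing that the stochastic integral against $M$ has nonpositive (in fact zero) expectation, I get $(u_1(x)-u_2(x))^+\le E_xZ^+_{T\wedge\zeta}$. Finally, letting $T\to\infty$ and invoking $Z_{T\wedge\zeta}\to0$ together with the class (D)/uniform integrability property of $Y^1,Y^2$ gives $E_xZ^+_{T\wedge\zeta}\to0$, whence $u_1\le u_2$ q.e. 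The main obstacle I anticipate is the rigorous control of the martingale term: $\int\fch_{\{Z_{s-}>0\}}\,dM$ is only a local martingale, so I must produce a localizing sequence and pass to the limit using the class (D) bounds and the $L^1$-integrability of the functionals to ensure its expectation does not contribute positively; a secondary technical point is justifying that the $m$-a.e.\ sign condition on $f^1,f^2$ transfers to a $ds\otimes dP_x$-a.e.\ statement along paths, which follows from the fact that the occupation measure of $\BM$ charges no $m$-null set for q.e.\ $x$.
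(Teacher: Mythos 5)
Your proposal is correct, and its first step coincides exactly with the paper's proof: the paper disposes of this theorem in one line, namely ``Follows from Theorem \ref{th3.5} and \cite[Proposition 4.9]{KR:JFA}'' --- i.e., it uses Theorem \ref{th3.5}(ii) to pass to probabilistic solutions and then cites the comparison principle for probabilistic solutions proved in \cite{KR:JFA}. Where you differ is that you do not black-box that second step: you reprove the probabilistic comparison directly, applying the Meyer--It\^o (Tanaka) formula to $Z^+=(Y^1-Y^2)^+$, discarding the nonnegative local-time and jump-correction terms, using $\mu_1\le\mu_2$ to kill the measure part, splitting the drift through $f^2(X_s,u_1(X_s))$ (resp. $f^1(X_s,u_2(X_s))$) so that the one-sided inequality handles one bracket and (E2) the other on $\{Z_{s-}>0\}$, and finally localizing the martingale term and passing to the limit via the class (D) property and $Z_{T\wedge\zeta}\to 0$. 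This is in substance the standard BSDE comparison argument underlying the cited \cite[Proposition 4.9]{KR:JFA}, so your route is sound and makes visible exactly where each hypothesis enters, at the cost of redoing work the paper delegates; the paper's citation buys brevity and reuses a result established there in the generality needed. Two small points: in your displayed decomposition the measure part of $-\int\fch_{\{Z_{s-}>0\}}\,dZ_s$ should read $+\int\fch_{\{Z_{s-}>0\}}\,d(A^{\mu_1}_s-A^{\mu_2}_s)$ rather than with the opposite sign --- fortunately this term is indeed nonpositive when $\mu_1\le\mu_2$, which is the conclusion you draw, so only the transcription, not the logic, is off; and your closing remarks on the two technical obstacles (localization plus class (D) to control the local martingale, and the resolvent/occupation-measure argument transferring the $m$-a.e.\ inequality to a $ds\otimes P_x$-a.e.\ statement for q.e.\ $x$) identify precisely the right justifications.
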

\begin{proof}
Follows from Theorem \ref{th3.5} and \cite[Proposition
4.9]{KR:JFA}.
\end{proof}

\begin{corollary}
\label{cor3.7} If  \mbox{\rm{(E2)}} is satisfied then there exists
at most one renormalized solution of \mbox{\rm{(\ref{eq1.1})}}.
\end{corollary}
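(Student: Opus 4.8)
The plan is to deduce uniqueness directly from the comparison theorem just established, applied with the two data sets taken equal. Suppose $u_1$ and $u_2$ are both renormalized solutions of (\ref{eq1.1}) with one and the same data $(f,\mu)$, where $f$ satisfies (E2). I would invoke the preceding theorem twice, each time setting $f^1=f^2=f$ and $\mu_1=\mu_2=\mu$, and in the two applications interchanging the roles of the two solutions.

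First I would check the orientation giving $u_1\le u_2$. The theorem requires $\mu_1\le\mu_2$, which holds trivially since $\mu_1=\mu_2=\mu$; and, in its first branch, it requires $f^1(x,u_1(x))\le f^2(x,u_1(x))$ $m$-a.e.\ together with $f^2$ satisfying (E2). Both are immediate here: $f^1(x,u_1(x))=f^2(x,u_1(x))=f(x,u_1(x))$, so the inequality holds with equality, and $f^2=f$ satisfies (E2) by hypothesis. Hence the theorem yields $u_1(x)\le u_2(x)$ for q.e.\ $x\in E$. I would then run the same argument with $u_1$ and $u_2$ interchanged (equivalently, through the alternative branch of the hypothesis, using $f^1(x,u_2(x))\le f^2(x,u_2(x))$ and (E2) for $f^1=f$), obtaining $u_2(x)\le u_1(x)$ for q.e.\ $x\in E$. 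Combining the two inequalities gives $u_1=u_2$ q.e., which is the assertion.

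There is essentially no obstacle here: the whole content is carried by the comparison theorem, and the only thing to verify is that its hypotheses are satisfied when the data coincide. This reduces the monotonicity requirement placed on $f$ to exactly (E2), and the symmetry of the argument under swapping $u_1\leftrightarrow u_2$ delivers the two opposite inequalities with no further work.
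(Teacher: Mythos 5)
Your proposal is correct and coincides with the paper's (implicit) argument: the corollary is stated there as an immediate consequence of the preceding comparison theorem, obtained exactly as you do by applying it twice with $f^1=f^2=f$ and $\mu_1=\mu_2=\mu$, interchanging the roles of $u_1$ and $u_2$ to get the two opposite inequalities q.e.
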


\begin{theorem}
\label{th3.8} Assume \mbox{\rm{ (E1)--(E3)}}. Then there exists
renormalized solution of \mbox{\rm{(\ref{eq1.1})}}.
\end{theorem}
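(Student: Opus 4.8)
The plan is to exploit the equivalence established in Theorem \ref{th3.5}: since part (i) turns any probabilistic solution with $f_u\in L^1(E;m)$ into a renormalized solution, it suffices to produce a probabilistic solution of (\ref{eq1.1}) under (E1)--(E3) whose nonlinear term is integrable. Thus the whole burden is shifted onto the existence theory for the Feynman--Kac representation (\ref{eq3.1}), equivalently for the backward equation (\ref{eq3.2}) driven by $A^{\mu}$, after which one only has to supply the extra $L^1$ regularity demanded by Theorem \ref{th3.5}(i).

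First I would construct a probabilistic solution. Because $f$ is only assumed continuous and monotone in $y$, the natural route is to approximate $f$ by generators $f^n$ that are Lipschitz in $y$ (a Yosida-type regularization combined with a truncation in $x$ adapted to the nest exhibiting $\mu\in\MM_{0,b}(E)$), so that for each $n$ the equation (\ref{eq3.2}) with data $(f^n,\mu)$ has a unique solution by the classical Lipschitz theory, giving probabilistic solutions $u_n$ of the regularized problems. The monotonicity (E2), through the comparison principle for such equations (the analogue of \cite[Proposition 4.9]{KR:JFA}), renders $\{u_n\}$ monotone and hence convergent q.e. to some $u$, and the a priori bounds below justify passing to the limit in (\ref{eq3.2}) and identify $u$ as a probabilistic solution of (\ref{eq1.1}). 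This is precisely the existence statement proved, under hypotheses of the type (E1)--(E3), in \cite{KR:JFA,KR:CM}, so in practice this step may simply be quoted.

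The technical heart, and the step I expect to be the main obstacle, is the a priori estimate $f_u\in L^1(E;m)$, which is exactly the extra integrability required by Theorem \ref{th3.5}(i). The key input is the one-sided bound from monotonicity: since $y\mapsto f(x,y)$ is non-increasing, $\mathrm{sgn}(y)\,f(x,y)\le|f(x,0)|$ for all $x,y$, so by (E3) the part of $f_u$ whose sign agrees with $u$ is dominated in $L^1(E;m)$ by $f(\cdot,0)$. Applying the Meyer--It\^o formula to $|Y^x_t|$ along the solution of (\ref{eq3.2}) and taking $E_x$-expectations, the increasing local-time terms enter with a definite sign, and combining the resulting identity with this sign estimate and with $\mu\in\MM_{0,b}(E)$ should yield $E_x\int_0^{\zeta}|f(X_s,Y^x_s)|\,ds\le C\big(|u(x)|+R|\mu|(x)+R|f(\cdot,0)|(x)\big)$. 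Integrating against $m$ and using the Revuz correspondence (\ref{eq2.15}) to pass from this additive-functional bound to the measure $f_u\cdot m$ gives $\|f_u\|_{L^1(E;m)}\le C(\|f(\cdot,0)\|_{L^1(E;m)}+\|\mu\|_{TV})<\infty$. The delicate point is controlling the cross contribution $f_u\,\fch_{\{\mathrm{sgn}(f_u)\neq\mathrm{sgn}(u)\}}$, which the termwise sign estimate does not dominate; here one feeds the bound back through the equation itself rather than using it pointwise. Once $f_u\in L^1(E;m)$ is secured, Theorem \ref{th3.5}(i) applies and furnishes the desired renormalized solution.
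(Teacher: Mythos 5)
Your proposal is correct and takes essentially the paper's own route: the proof given in the paper is precisely the reduction you describe, namely Theorem \ref{th3.5}(i) combined with the existence of a probabilistic solution satisfying $f_u\in L^1(E;m)$ under (E1)--(E3), which the paper simply quotes from \cite[Theorem 3.8, Proposition 3.10]{KR:CM} (see also \cite[Theorem 5.14]{KR:JFA}). Your sketch of that quoted input (Lipschitz approximation of $f$, comparison via monotonicity, and the Tanaka-type a priori bound transferred to the measure $f_u\cdot m$ via the Revuz correspondence rather than by naive integration against $m$) is exactly the content of those references, so, as you anticipate, it may be cited rather than reproved.
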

\begin{proof}
Follows from Theorem \ref{th3.5} and \cite[Theorem 3.8,
Proposition 3.10]{KR:CM} (see also \cite[Theorem 5.14]{KR:JFA}).
\end{proof}

We close this section with some general remarks on possible
generalization of our results and on their applicability.

An inspection of the proof of Theorem \ref{th3.5} reveals that it
only makes use  of some general results from the theory of
stochastic processes that are valid for general semimartingals,
some results from \cite{KR:CM}, which are proved for quasi-regular
forms and the fact that $\BM$ associated with $\EE$ in the
resolvent sense is a standard process (the fact that $\BM$ a Hunt
process is not needed). Therefore the proof of Theorem \ref{th3.5}
carries over to quasi-regular Dirichlet forms.

By using probabilistic methods one can prove that for many
interesting equations there exists a unique probabilistic solution
$u$ such that $f_u\in L^1(E;m)$. This can be done for instance for
$f$ satisfying (E1)--(E3).  For specific examples of local and
nonlocal operators $A$ satisfying our assumptions see, e.g.,
\cite{FOT,KR:JFA,KR:CM,MR}.  Then as in Corollary \ref{cor3.7} and
Theorems \ref{th3.8} above, a direct consequence of Theorem
\ref{th3.5} is that $u$ is a renormalized solution to
(\ref{eq1.1}), i.e. has clear analytical meaning, and that $u$ is
the unique renormalized solution. On the other hand, renormalized
solutions to (\ref{eq1.1}), which are obtained by analytical
methods, automatically have stochastic representation of the form
(\ref{eq3.1}). We may then use (\ref{eq3.1}) (and the theory of
BSDEs; see Remark \ref{rem3.1}) to study further properties of the
solution by probabilistic methods. For instance, probabilistic
methods are quite effective in proving comparison results and
hence  uniqueness.

It would be desirable to define renormalized solutions for
equations with general bounded measures $\mu$, at least for some
classes of nonlocal operators. Another interesting open problem is
to give other equivalent to Definition \ref{def3.8} analytical
definitions of a solution (like in the local case considered in
\cite{DMOP}).

\section{Parabolic equations}
\label{sec4}

In this section we assume that the family $\{B^{(t)},t\in[0,T]\}$
satisfies the assumptions of Section \ref{sec2.2} and $\EE^{0,T}$
is the time dependent Dirichlet form defined by (\ref{eq2.6}).  By
$L_t$ we denote the operator associated with $B^{(t)}$ in the
sense of (\ref{eq1.6}) and by $\frac{\partial u}{\partial t}+L_t$
the operator corresponding to $\EE^{0,T}$, i.e. the generator of
the strongly continuous contraction semigroup corresponding to
$(G^{0,T}_{\alpha})_{\alpha>0}$.

We consider the Cauchy problem
\begin{equation}
\label{eq5.1} -\frac{\partial u}{\partial t}-L_tu=f_u+\mu,\quad
u(T)=\varphi,
\end{equation}
where $\varphi:E\rightarrow\BR$ is a measurable function such that
$\delta_{\{T\}}\otimes\varphi\cdot m\in\RR(E_{0,T})$,
$\mu\in\RR(E_{0,T})$, $f:[0,T]\times E\times\BR\rightarrow\BR$ is
measurable function and $f_{u}=f(\cdot,\cdot,u)$.

In what follows we maintain the notation of Sections \ref{sec2.2}
and  \ref{sec2.3} concerning time dependent forms and associated
Markov processes. In particular, $\BBM$ is a Markov process
associated with $\EE$ and $\zeta_{\tau}$ is defined by
(\ref{eq2.10}). By abuse of notation, in this section
\[
\langle\mu,u\rangle=\int_{E_{0,T}}u(z)\,\mu(dz)
\]
for $u:E_{0,T}\rightarrow\BR$ and $\mu\in\RR(E_{0,T})$.

We will say that a Borel measurable function $u$ on $E_{0,T}$ is
quasi-c\`adl\`ag  if for q.e. $z\in E_{0,T}$ the process $t\mapsto
u(\BBX_{t})$ is c\`adl\`ag on $[0,T-\tau(0)]$, $P_{z}$-a.s.

\begin{definition}
Let $\delta_{\{T\}}\otimes\varphi\cdot m\in\RR(E_{0,T})$,
$\mu\in\RR(E_{0,T})$. We say that a quasi-c\`adl\`ag function
$u:E_{0,T}\rightarrow\BR$ is a probabilistic solution to
(\ref{eq5.1}) if $f_u\cdot m_1\in\RR(E_{0,T})$ and for q.e. $z\in
E_{0,T}$,
\begin{equation}
\label{eq5.2} u(z)=E_z\Big(\varphi(\BBX_{\zeta_{\tau}})
+\int^{\zeta_{\tau}}_0f_u(\BBX_t)\,dt
+\int^{\zeta_{\tau}}_0dA^{\mu}_t\Big).
\end{equation}
\end{definition}

\begin{remark}
\label{rem4.2} Let $\varphi,f_u,\mu$ be as in the above
definition. Then by  \cite[Proposition 3.4]{K:JFA} the right-hand
side of (\ref{eq5.2}) is a quasi-c\`adl\`ag function of $z$.
Therefore the quasi-c\`adl\`ag requirement on $u$ in the above
definition can be omitted.
\smallskip\\
(ii) From the proof of \cite[Theorem 5.8]{K:JFA} it follows that
if $u$ is a probabilistic solution to (\ref{eq5.1}) then there is
an adapted process $M$ such that $M$ is a martingale under $P_z$
for q.e. $z\in E_{0,T}$ and
\begin{equation}
\label{eq4.10} Y_t=\varphi(\BBX_{\zeta_{\tau}})
+\int^{\zeta_{\tau}}_tf(\BBX_s,Y_s)\,ds
+\int^{\zeta_{\tau}}_tdA^{\mu}_s-\int^{\zeta_{\tau}}_tdM_s,\quad
t\in[0,\zeta_{\tau}],\quad P_z\mbox{-a.s.}
\end{equation}
for q.e. $z\in E_{0,T}$, where $Y_t=u(\BBX_t)$, $t\ge0$.
\end{remark}

\begin{remark}
\label{rem4.3} (i) Let $\nu=\delta_{\{T\}}\otimes\varphi\cdot m$.
If $\varphi\in L^1(E;m)$ then $\nu\in\RR(E_{0,T})$.
\smallskip\\
(ii) One can check that
$A^{\nu}_t=\fch_{[T-\tau(0),\infty]\cap\{T>\tau(0)\}}(t)
\varphi(\BBX_{\zeta_{\tau}})$, $t\ge0$, for $\nu$ defined in (i)
(see the beginning of the proof of \cite[Proposition 3.4]{K:JFA}).
Hence
\[
E_z\varphi(\BBX_{\zeta_{\tau}})=E_z\int_0^{\zeta_{\tau}}dA^{\nu}_t.
\]
\end{remark}

Our definition of a renormalized solution is similar to
\cite[Definition 4.1]{PPP}.
\begin{definition}
\label{def4.8} Let $\varphi\in L^1(E;m)$,
$\mu\in\MM_{0,b}(E_{0,T})$. We say that a measurable function
$u:E_{0,T} \rightarrow\BR$ is a renormalized solution of
(\ref{eq5.1}) if
\begin{enumerate}
\item[(a)]$u$ is quasi-c\`adl\`ag, $f_u\in L^1(E_{0,T};m_1)$ and
$T_ku\in\VV_{0,T}$ for every $k>0$,
\item[(b)]there exists a sequence
$\{\lambda_{k}\}\subset\MM_{0,b}(E_{0,T})$ such that
$\|\lambda_{k}\|_{TV}\rightarrow0$ as $k\rightarrow\infty$ and for
every $k\in\BN$  and every bounded $v\in\WW_0$,
\begin{equation}
\label{eq4.15} \EE^{0,T}(T_{k}u,v) =(T_k\varphi,v(T))_{H} +\langle
f_u\cdot m+\mu,\tilde v\rangle +\langle\nu_k,\tilde v\rangle.
\end{equation}
\end{enumerate}
\end{definition}

\begin{theorem}
\label{th4.9} Assume that $\varphi\in L^{1}(E;m)$,
$\mu\in\MM_{0,b}(E_{0,T})$.
\begin{enumerate}
\item[\rm(i)]If  $u$ is a probabilistic solution of
\mbox{\rm(\ref{eq5.1})} and $f_u\in L^1(E_{0,T};m_1)$ then $u$ is
a renormalized solution of \mbox{\rm(\ref{eq5.1})}.
\item[\rm(ii)]If  $u$ is a renormalized
solution of \mbox{\rm(\ref{eq5.1})} then  $u$ is a probabilistic
solution of \mbox{\rm(\ref{eq5.1})}.
\end{enumerate}
\end{theorem}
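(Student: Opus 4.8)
The plan is to transcribe the proof of Theorem \ref{th3.5} to the time-space setting, replacing $\BM$ by $\BBM$, the additive functionals and the potential operator $R$ by those of $\BBM$ and $R^{0,T}$, and the form $\EE$ by $\EE^{0,T}$; the one genuinely new ingredient is the terminal condition $u(T)=\varphi$, which I would account for through Remark \ref{rem4.3}. For part (i), let $u$ be a probabilistic solution of (\ref{eq5.1}) and let $M$ be the martingale of Remark \ref{rem4.2}(ii), so that with $Y_t=u(\BBX_t)$ the backward equation (\ref{eq4.10}) holds. Setting $\nu=\delta_{\{T\}}\otimes\varphi\cdot m$ and using Remark \ref{rem4.3}(ii), which identifies $\varphi(\BBX_{\zeta_\tau})$ with $\int_0^{\zeta_\tau}dA^\nu_t$, I would first rewrite (\ref{eq4.10}) as a forward equation of the type (\ref{eq4.2}) driven by the measure $f_u\cdot m_1+\mu+\nu$. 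Applying the Meyer-It\^o formula to $(u\wedge k)(\BBX_t)$ and to $(u(\BBX_t)+k)\wedge0$ exactly as in (\ref{eq4.02})--(\ref{eq4.05}) produces increasing processes $A^{1,k},A^{2,k}$ and a representation of $T_ku(\BBX_t)$ of the form (\ref{eq4.06}); crucially, the terminal value now becomes $T_ku(\BBX_{\zeta_\tau})=T_k\varphi(\BBX_{\zeta_\tau})$, which is the source of the boundary term $(T_k\varphi,v(T))_H$ in (\ref{eq4.15}).

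The estimates on $E_zA^{i,k}_{\zeta_\tau}$, the passage to continuous compensators $B^{1,k},B^{2,k}$ (here one again uses that $\BBM$ has no predictable jumps), and the total-variation bounds on their Revuz measures $b^1_k,b^2_k$ all go through as in the elliptic case, giving $\lambda_k:=-\fch_{\{u\notin(-k,k]\}}(f_u\cdot m_1+\mu)+b^1_k-b^2_k\in\MM_{0,b}(E_{0,T})$ with $\|\lambda_k\|_{TV}\to0$, and the probabilistic representation
\[
T_ku(z)=E_z\Big(T_k\varphi(\BBX_{\zeta_\tau})+\int_0^{\zeta_\tau}(f_u(\BBX_t)\,dt+dA^\mu_t)+\int_0^{\zeta_\tau}dA^{\lambda_k}_t\Big)
\]
for q.e.\ $z\in E_{0,T}$. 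The regularity $T_ku\in\VV_{0,T}$ follows from the corresponding regularity result in \cite{K:JFA}. To convert the representation into (\ref{eq4.15}) I would run the generalized resolvent-equation argument of (\ref{eq3.21}) with $\EE^{0,T}$ in place of $\EE$: testing $\alpha(T_ku-\alpha G^{0,T}_\alpha(T_ku))$ against a bounded $v\in\WW_0$ and letting $\alpha\to\infty$, the left side tends to $\EE^{0,T}(T_ku,v)$, while the right side, written as $\langle f_u\cdot m_1+\mu+\nu+\lambda_k,\alpha\hat G^{0,T}_\alpha v\rangle$, tends to $\langle f_u\cdot m_1+\mu+\lambda_k,\tilde v\rangle+(T_k\varphi,v(T))_H$, the last term arising from the terminal trace of $\nu$; the requisite convergence of the coresolvents and of the energy forms is the analogue for generalized Dirichlet forms of the step invoked in (\ref{eq3.21}), available in \cite{S,O3}. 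A truncation argument on $v$ as at the end of the proof of Theorem \ref{th3.5} then yields (\ref{eq4.15}) for all bounded $v\in\WW_0$.

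For part (ii), if $u$ is a renormalized solution, then testing (\ref{eq4.15}) against suitable bounded copotentials for $\EE^{0,T}$ (in the sense of \cite{K:JFA}) shows that each $T_ku$ is a solution in the sense of duality of the linear Cauchy problem
\[
-\frac{\partial (T_ku)}{\partial t}-L_t(T_ku)=f_u+\mu+\lambda_k,\qquad (T_ku)(T)=T_k\varphi,
\]
hence, by the equivalence of duality and probabilistic solutions established in \cite{K:JFA}, $T_ku$ is a probabilistic solution and the displayed representation holds. It then remains to show that, along a subsequence, $R^{0,T}\lambda_k\to0$ $m_1$-a.e.; I would prove this by the copotential-and-generalized-nest argument used for (\ref{eq3.26}), first establishing the analogue of (\ref{eq3.18}) that $\tilde w\le c$ $\sigma$-a.e.\ forces $w\le c$ $m_1$-a.e.\ for a copotential $w$ of $\sigma$, and then choosing a nest on which the relevant copotentials are bounded and invoking $\|\lambda_k\|_{TV}\to0$. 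Letting $k\to\infty$, using $T_k\varphi\to\varphi$ and $T_ku\to u$, recovers (\ref{eq5.2}) for $m_1$-a.e., and hence (by quasi-c\`adl\`ag regularity) q.e., $z\in E_{0,T}$.

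The main obstacle is the bookkeeping for the terminal condition: one must verify that truncating the solution of (\ref{eq4.10}) replaces the terminal value $\varphi(\BBX_{\zeta_\tau})$ cleanly by $T_k\varphi(\BBX_{\zeta_\tau})$ and that, under the coresolvent limit, this contributes exactly the trace term $(T_k\varphi,v(T))_H$ rather than leaking into the bulk measure $\lambda_k$. A secondary technical point is that, unlike in the elliptic case, $A^\mu$ need not be continuous here (it is only a natural AF), so one should check that its jump part does not interfere with the compensation step; both difficulties are absorbed by the fact that the Meyer-It\^o calculus and the compensator construction are valid for general semimartingales, as already noted in the remarks following the proof of Theorem \ref{th3.5}.
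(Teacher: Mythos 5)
Your part (i) follows the paper's architecture faithfully (Meyer--It\^o, compensation, Revuz measures, the generalized resolvent equation, and the terminal condition routed through Remark \ref{rem4.3}), but the issue you defer at the end as a ``secondary technical point'' is in fact the main new difficulty of the parabolic case, and your treatment of it contains a concrete error. Since $A^{\mu}$ is only a natural (predictable) AF, the processes $A^{1,k},A^{2,k}$ produced by the Meyer--It\^o formula acquire predictable jumps at the jump times of $A^{\mu}$; hence their compensators $B^{i,k}$ are \emph{not} continuous, and your appeal to ``$\BBM$ has no predictable jumps'' to get continuous compensators fails --- the paper only shows that $B^{i,k}$ are natural AFs (jumps of $\BBM$ are totally inaccessible, jumps of the predictable $B^{i,k}$ are not). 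More importantly, the indicators $\fch_{\{Y_{s-}>k\}}$ involve left limits and are not functions of $\BBX_s$, so they are not directly in Revuz correspondence with any measure; to identify the Revuz measures and obtain the total-variation bounds one must rewrite them as $\fch_{\{u(\BBX_s)+a(\BBX_s)>k\}}$, using the representation $\Delta A^{\mu}_t=a(\BBX_{t-})$ from \cite[Theorem 16.8]{GS} together with quasi-left continuity of the filtration and of $\BBX$ (this is the content of (\ref{eq4.36})--(\ref{eq4.44})). Consequently the correct correcting measure carries the indicator $\fch_{\{u+a\notin(-k,k]\}}$ on its $\mu$-part, whereas your formula $-\fch_{\{u\notin(-k,k]\}}(f_u\cdot m_1+\mu)+b^1_k-b^2_k$ uses $\fch_{\{u\notin(-k,k]\}}$; with that definition neither the smoothness of the measure nor $\|\nu_k\|_{TV}\rightarrow0$ follows, so the step ``the total-variation bounds all go through as in the elliptic case'' is precisely where the missing work lies.

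In part (ii) your route diverges from the paper's and, as stated, does not work: the elliptic argument around (\ref{eq3.18})--(\ref{eq3.19}) constructs copotentials $\hat U\nu$ via Lax--Milgram on $(D_e(\EE),\tilde\EE)$ under the strong sector condition, and the time dependent form $\EE^{0,T}$ satisfies no sector condition, so the objects ``copotential $w$ of $\sigma$'' and the maximum-principle analogue of (\ref{eq3.18}) are simply not available in this framework. The paper sidesteps the entire nest machinery by exploiting the finite horizon: for any Borel $F\subset E_{0,T}$ with $m(F)<\infty$ one has $\widetilde{\hat G^{0,T}\fch_F}\le T$, hence
\begin{equation*}
(R^{0,T}|\nu_k|,\fch_F)_{\HH_{0,T}}
=\langle|\nu_k|,\widetilde{\hat G^{0,T}\fch_F}\rangle
\le T\|\nu_k\|_{TV}\rightarrow0,
\end{equation*}
which already yields $R^{0,T}\nu_k\rightarrow0$ $m_1$-a.e.\ along a subsequence. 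You should replace your copotential-and-nest plan by this one-line estimate (or else develop a genuine $0$-order potential theory for $\EE^{0,T}$, which is far more than the theorem requires); the remainder of your part (ii) --- duality solution of the truncated linear problem via \cite{K:JFA} and passage to the limit in (\ref{eq5.4}) --- agrees with the paper.
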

\begin{proof}
(i) Let $u$ be a probabilistic solution of (\ref{eq5.1}). For
$k>0$ put
\[
Y^k_t=T_ku(\BBX_t),\quad t\ge0.
\]
By Remark \ref{rem4.2} there is a martingale $M$ such that
(\ref{eq4.10}) is satisfied. As in the proof of  Theorem
\ref{th3.5}, applying the Meyer-It\^o formula we show that for
$k>0$,
\begin{equation}
\label{eq4.17} Y^k_t-Y^k_0=\int^t_0\fch_{\{-k<Y_{s-}\le k\}}\,dY_s
-(A^{1,k}_t-A^{2,k}_t)
\end{equation}
for some increasing processes $A^{1,k},A^{2,k}$ such that
\begin{equation}
\label{eq4.32} E_zA^{1,k}_{\zeta_{\tau}}\le
E_z(\varphi-\varphi\wedge k)(\BBX_{\zeta_{\tau}})+
E_z\int^{\zeta_{\tau}}_0\fch_{\{Y_{s-}>k\}}(f_u(\BBX_s)\,ds
+dA^{\mu}_s)
\end{equation}
and
\begin{equation}
\label{eq4.33} E_zA^{2,k}_{\zeta_{\tau}}\le -E_z((\varphi+k)\wedge
0)(\BBX_{\zeta_{\tau}})
-E_z\int^{\zeta_{\tau}}_0\fch_{\{Y_{s-}\le-k\}}(f_u(\BBX_s)\,ds
+dA^{\mu}_s)
\end{equation}
for q.e. $z\in E^1$. Hence $E_zA^{i,k}_{\zeta_\tau}<\infty$ for q.e. $z\in
E_{0,T}$. From this and \cite[Theorem A.3.16]{FOT} it follows that
there is a positive increasing AF $B^{i,k}$ of $\BBM$ such that
$B^{i,k}$ is the compensator of $A^{i,k}$ under $P_z$ for q.e.
$z\in E_{0,T}$. In particular,
\begin{equation}
\label{eq4.38} E_zA^{i,k}_t=E_zB^{i,k}_t,\quad t\ge0,\quad i=1,2
\end{equation}
for q.e. $z\in E_{0,T}$. Since $A^\mu$ is predictable, there
exists a sequence $\{T_n\}$ of predictable stopping times
exhausting the jumps of $A^\mu$, i.e.
\[\{\Delta A\neq 0\}=\bigcup_{n\ge 1}[T_n],
\quad [T_n]\cap [T_m]=\emptyset,\quad n\neq m,
\]
where $[T_n]$ denotes the graph of $T_n$. Let $A^{\mu,c}$ denote
the continuous part of $A^{\mu}$ and let
$A^{\mu,d}=A^{\mu}-A^{\mu,c}$. We have
\begin{align}
\label{eq4.36} E_z\int^{\zeta_{\tau}}_0\fch_{\{Y_{s-}>k\}}\,
dA^{\mu}_s&= E_z\int^{\zeta_{\tau}}_0\fch_{\{Y_{s-}>k\}}\,
dA^{\mu,c}_s+ E_z\int^{\zeta_{\tau}}_0\fch_{\{Y_{s-}>k\}}\,
dA^{\mu,d}_s \nonumber\\
&= E_z\int^{\zeta_{\tau}}_0 \fch_{\{u(\BBX_s)+\Delta
u(\BBX_s)>k\}}\, dA^{\mu,c}_s \nonumber\\
&\quad+ \sum_{n\ge 1}E_z\fch_{\{u(\BBX_{T_n})+\Delta
u(\BBX_{T_n})>k\}}\, \Delta A^\mu_{T_n}.
\end{align}
Since  the filtration $\{\FF_t,t\ge0\}$ is quasi-left continuous
(see \cite[Proposition IV.4.2]{BG}), $\Delta u(X_{T_n})=\Delta
A^\mu_{T_n}$ by Theorem A.3.6 in \cite{FOT}. On the other hand, by
\cite[Theorem 16.8]{GS}, there exists a Borel function
$a:E_{0,T}\rightarrow\BR$ such that $\Delta A^\mu_t=
a(\BBX_{t-})$, $t>0$, $P_z$-a.s. for q.e. $z\in E_{0,T}$. From
this and the fact that $\BBX$ is quasi-left continuous it follows
that $\Delta u(\BBX_{T_n})=a(\BBX_{T_n})$. By this and
(\ref{eq4.36}),
\begin{equation}
\label{eq4.37} E_z\int^{\zeta_{\tau}}_0\fch_{\{Y_{s-}>k\}}\,
dA^{\mu}_s
=E_z\int^{\zeta_{\tau}}_0\fch_{\{u(\BBX_s)+a(\BBX_{s})>k\}}\,dA^\mu_s.
\end{equation}
Analogously to (\ref{eq4.37}) we show that
\begin{equation}
\label{eq4.44} E_z\int^{\zeta_{\tau}}_0\fch_{\{Y_{s-}\le-k\}}\,
dA^{\mu}_s
=E_z\int^{\zeta_{\tau}}_0\fch_{\{u(\BBX_s)+a(\BBX_{s})\le-k\}}\,dA^\mu_s.
\end{equation}
Combining (\ref{eq4.32})--(\ref{eq4.38}) with (\ref{eq4.37}),
(\ref{eq4.44}) we get
\begin{align}
\label{eq4.34} E_zB^{1,k}_{\zeta_{\tau}}&\le
E_z\Big(\fch_{\{\varphi>k\}}\varphi(\BBX_{\zeta_{\tau}})+
\int^{\zeta_{\tau}}_0\fch_{\{u(\BBX_s)>k\}}f_u(\BBX_s)\,ds\nonumber\\
&\qquad+\int^{\zeta_{\tau}}_0\fch_{\{(u+a)(\BBX_s)>k\}}dA^{\mu}_s\Big)
\end{align}
and
\begin{align}
\label{eq4.35} E_zB^{2,k}_{\zeta_{\tau}}&\le
-E_z\Big(\fch_{\{\varphi<-k\}}\varphi(\BBX_{\zeta_{\tau}})+
\int^{\zeta_{\tau}}_0\fch_{\{u(\BBX_s)\le-k\}}f_u(\BBX_s)\,ds\nonumber\\
&\qquad+\int^{\zeta_{\tau}}_0\fch_{\{(u+a)(\BBX_s)\le-k\}}dA^{\mu}_s\Big)
\end{align}
for q.e. $z\in E_{0,T}$. By \cite[Theorem A.3.2]{FOT} the jumps of
$\BBM$ occur in totally inaccessible stopping times, while the
jumps of $B^{i,k}$ in stopping times which are not totally
inaccessible since $B^{i,k}$ is predictable.  Therefore $\BBM$ and
$B^{i,k}$ have no common discontinuities, and hence $B^{i,k}$ is a
positive natural AF of $\BBM$. Let $b^i_k$, $i=1,2$,  denote the
Revuz measure of $B^{i,k}$. From (\ref{eq4.34}), (\ref{eq4.35}),
Remark \ref{rem4.2} and \cite[Proposition 3.13]{K:JFA} we conclude
that
\[
b^1_k(E_{0,T})\le \|\fch_{\{\varphi>k\}}\varphi\|_{L^1(E;m)}
+\|f_u\|_{L^1(E_{0,T};m_1)}+\|\fch_{\{u+a>k\}}\cdot\mu\|_{TV},
\]
\[
b^2_k(E_{0,T})\le \|\fch_{\{\varphi<-k\}}\varphi\|_{L^1(E;m)}
+\|f_u\|_{L^1(E_{0,T};m_1)}+\|\fch_{\{u+a\le-k\}}\cdot\mu\|_{TV}.
\]
Hence $b^1_k,b^2_k\in\MM_{0,b}(E_{0,T})$ and
$\|b^1_k\|_{TV}\rightarrow0$, $\|b^2_k\|_{TV}\rightarrow0$ as
$k\rightarrow\infty$. Combining (\ref{eq4.10}) with (\ref{eq4.17})
we see that
\begin{align*}
Y^{k}_t-Y^{k}_0&=-\int^t_0\fch_{\{-k<Y_{t}\le k\}}
f_u(\BBX_s)\,ds\\
&\quad-\int^t_0\fch_{\{-k<Y_{t-}\le k\}} \,dA^{\mu}_s
-(B^{1,k}_t-B^{2,k}_t)+M^{k}_t
\end{align*}
with
\[
M^k_t=\int^t_0\fch_{\{-k<Y_{s-}\le k\}}\,dM_s
-(A^{1,k}_t-B^{1,k}_t)+A^{2,k}_t-B^{2,k}_t.
\]
By the above and the definition of the measures $b^1_k,b^2_k$ we
have
\begin{align*}
T_{k}u(z)&=E_z\Big(T_k\varphi(\BBX_{\zeta_{\tau}})
+\int^{\zeta_{\tau}}_0\fch_{\{-k<Y_t\le k\}}
f_u(\BBX_t)\,dt \nonumber \\
&\qquad\quad +\int^{\zeta_{\tau}}_0\fch_{\{-k<Y_{t-}\le
k\}}\,dA^{\mu}_t +\int^{\zeta_{\tau}}_0
\,d(A^{b^1_k}_t-A^{b^2_k}_t)\Big).
\end{align*}
From this and (\ref{eq4.37}), (\ref{eq4.44}) we obtain
\begin{equation}
\label{eq5.4} T_{k}u(z) =E_z\Big(T_k\varphi(\BBX_{\zeta_{\tau}})
+\int^{\zeta_{\tau}}_0(f_u(\BBX_t)\,dt+dA^{\mu}_t)
+\int^{\zeta_{\tau}}_0 \,dA^{\nu_{k}}_t \Big)
\end{equation}
with
\[
\nu_k=-\fch_{\{u\notin(-k,k]\}}f_u\cdot
m-\fch_{\{u+a\notin(-k,k]\}}\cdot\mu+b^1_k-b^2_k.
\]
By what has already been proved, $\nu_k\in\MM_{0,b}(E_{0,T})$ and
$\|\nu_k\|_{TV}\rightarrow0$ as $k\rightarrow\infty$. Moreover, by
by \cite[Theorem 3.12]{K:JFA}, $T_{k}u\in\VV_{0,T}$, so what is
left is to show that (\ref{eq4.15}) is satisfied. We shall show
that (\ref{eq4.15}) follows from (\ref{eq5.4}) by the same method
as in elliptic case (see the proof of the fact that (\ref{eq4.1})
follows from (\ref{eq3.20})). Let
$\lambda_k=\delta_{\{T\}}\otimes\varphi\cdot m+f_u\cdot
m+\mu+\nu_k$ and let $A=A^{\lambda_k}$. By Fubini's theorem,
\begin{align*}
R^{0,T}{\lambda_k}(z)-R^{0,T}_{\alpha}\lambda_k(z)
&=E_z\int^{\zeta_{\tau}}_0(1-e^{-\alpha t})\,dA_t\\
&=E_z\int^{\zeta_{\tau}}_0(\int^t_0\alpha e^{-\alpha s}\,ds)\,dA_t
=\alpha E_z\int^{\zeta_{\tau}}_0e^{-\alpha t}
(\int_t^{\zeta_{\tau}}dA_s)\,dt
\end{align*}
for q.e. $z\in E_{0,T}$. Using the definition of $\zeta_{\tau}$
and the fact that $A$ is an AF of $\BBM$ one can check that
$A_{\zeta_{\tau}}-A_t=(A_{\zeta_{\tau}}-A_0)\circ\theta_t$.
Therefore applying the Markov property shows that
\[
R^{0,T}{\lambda_k}(z)-R^{0,T}_{\alpha}\lambda_k(z) =\alpha
E_z\int^{\zeta_{\tau}}_0e^{-\alpha t}
E_{\BBX_t}(\int^{\zeta_{\tau}}_0dA_s)\,dt=\alpha
R^{0,T}_{\alpha}(R^{0,T}\lambda_k)(z)
\]
for q.e. $z\in E_{0,T}$. Since by Remark \ref{rem4.3} and
(\ref{eq5.4}), $T_ku=R^{0,T}\lambda_k$,  it follows from the above
equation that
\begin{equation}
\label{eq4.14} \alpha(T_ku-\alpha
R^{0,T}_{\alpha}(T_ku),v)_{\HH_{0,T}}=\alpha
(R^{0,T}_{\alpha}\lambda_k,v)_{\HH_{0,T}}
\end{equation}
for every bounded $v\in\WW_0$. Since the  left-hand side of
(\ref{eq4.14}) equals  $\EE^{0,T}(\alpha G^{0,T}_{\alpha}T_ku,v)$,
it converges to $\EE^{0,T}(T_ku,v)$ as $\alpha\rightarrow\infty$.
Let $\hat R^{0,T}_{\alpha}$ denote the resolvent associated with
the dual form $\hat\EE^{0,T}$. By \cite[Corollary III.3.8]{S}
applied to the functions  $\alpha\hat R^{0,T}_{\alpha}v$ we may
assume that $\alpha\hat R^{0,T}_{\alpha}v$ converges to $\tilde v$
q.e. as $\alpha\rightarrow\infty$. It follows that the right-hand
side of (\ref{eq4.14}) converges to $\langle\lambda_k,\tilde
v\rangle$ as $\alpha\rightarrow\infty$. Therefore letting
$\alpha\rightarrow\infty$ in (\ref{eq4.14}) we obtain
(\ref{eq4.15}), which completes the proof of (i).
\smallskip\\
(ii) Let $\eta\in L^2(E_{0,T};m_1)$ be a bounded non-negative
function. Then $\hat G^{0,T}\eta\in\WW_0$ and
\[
\EE^{0,T}(T_{k}u,\hat G^{0,T}\eta)=(T_{k}u,\eta)_{\HH_{0,T}}.
\]
From this and (\ref{eq4.15}) it follows that $T_ku$ is a solution
in the sense of duality  (see \cite[Section 4]{K:JFA} for the
definition) of the linear problem
\begin{equation}
\label{eq4.11} (-\frac{\partial}{\partial t}-L_t)T_ku=f_u
+\mu+\nu_k,\quad T_ku(T)=T_k\varphi,
\end{equation}
so by \cite[Corollary 4.2]{K:JFA} $T_ku$ is a probabilistic
solution of the above equation. Therefore (\ref{eq5.4}) (with the
measure $\nu_k$ of (\ref{eq4.11}))  is satisfied. Since
$\|\nu_k\|_{TV}\rightarrow0$ and for every Borel set $F\subset
E_{0,T}$ such that $m(F)<\infty$ we have
\[
(R^{0,T}|\nu_k|,\fch_F)_{\HH_{0,T}}
=\langle|\nu_k|,\widetilde{\hat G^{0,T}\fch_F}\rangle \le T
\|\nu_k\|_{TV},
\]
one can find a subsequence (still denoted by $k$) such that
$R^{0,T}\nu_k(z)\rightarrow0$ as $k\rightarrow\infty$ for
$m_1$-a.e. $z\in E_{0,T}$. Therefore letting $k\rightarrow\infty$
in (\ref{eq5.4}) we show that (\ref{eq5.2}) holds true for
$m_1$-a.e. $z\in E_{0,T}$, and hence for q.e. $z\in E_{0,T}$,
because $u$ and the right-hand side of (\ref{eq5.2}) are
quasi-continuous.
\end{proof}

\begin{remark}
One can show that the function $u+a$ appearing in (\ref{eq4.34})
and (\ref{eq4.35}) is equal quasi everywhere  to the precise
version of $u$ (For the notion of a precise version of a parabolic
potential see \cite{P}).
\end{remark}

We now illustrate the applicability of Theorem \ref{th4.9}. Let us
consider the following hypotheses.

\begin{enumerate}
\item[(P1)]$u\mapsto f(t,x,u)$ is
continuous for every $(t, x)\in E_{0,T}$.
\item[(P2)]There is $\alpha\in\BR$ such that
$(f(t,x,y)-f(t,x,y'))(y-y')\le\alpha|y-y'|^{2}$ for every
$(t,x)\in E_{0,T}$ and $y, y'\in\BR$.
\item[(P3)]$\mu\in \MM_{0,b}(E_{0,T})$ and
$f(\cdot,y)\in L^{1}(E_{0,T};m_1)$ for every $y\in\BR$.
\end{enumerate}

\begin{theorem}
Let  $u_{i}$ be  renormalized solution of \mbox{\rm(\ref{eq5.1})}
with terminal condition $\varphi_{i}$, and right-hand side
$(f^{i},\mu_{i})$, $i=1,2$. If $\varphi_{1}\le\varphi_{2}$
$m_{1}$-a.e., $\mu_{1}\le\mu_{2}$ and either $f^{1}$ satisfies
$\mbox{\rm{(P2)}}$ and $f^{1}_{u_{2}}\le f^{2}_{u_{2}}$
$m_{1}$-a.e. or $f^{2}$ satisfies $\mbox{\rm{(P2)}}$ and
$f^{1}_{u_{1}}\le f^{2}_{ u_{1}}$ $m_{1}$-a.e., then  $u_{1}(z)\le
u_{2}(z)$ for q.e. $z\in E_{0, T}$.
\end{theorem}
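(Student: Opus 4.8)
The plan is to reduce the comparison to the level of probabilistic solutions and then to the associated backward stochastic differential equations, mirroring the elliptic comparison result proved earlier via Theorem \ref{th3.5} and \cite[Proposition 4.9]{KR:JFA}. First I would apply Theorem \ref{th4.9}(ii) to each $u_i$, so that $u_i$ is a probabilistic solution of (\ref{eq5.1}) with data $(\varphi_i,f^i,\mu_i)$. By Remark \ref{rem4.2}(ii), for q.e.\ $z\in E_{0,T}$ there is a martingale $M^i$ under $P_z$ such that, with $Y^i_t=u_i(\BBX_t)$,
\begin{equation*}
Y^i_t=\varphi_i(\BBX_{\zeta_{\tau}})+\int^{\zeta_{\tau}}_t f^i(\BBX_s,Y^i_s)\,ds+\int^{\zeta_{\tau}}_t dA^{\mu_i}_s-\int^{\zeta_{\tau}}_t dM^i_s,\qquad t\in[0,\zeta_{\tau}].
\end{equation*}
Since $u_i$ and the right-hand sides are quasi-continuous, it suffices to show $Y^1_t\le Y^2_t$ $P_z$-a.s.\ for q.e.\ $z$; evaluating at $t=0$ then yields $u_1(z)\le u_2(z)$ q.e.

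The heart of the matter is a comparison theorem for the two BSDEs above, which is the parabolic analogue of \cite[Proposition 4.9]{KR:JFA} and is available in \cite{K:JFA}. The ordering of the data enters as follows: $\varphi_1\le\varphi_2$ gives $\hat Y_{\zeta_{\tau}}:=Y^1_{\zeta_{\tau}}-Y^2_{\zeta_{\tau}}\le0$, while $\mu_1\le\mu_2$ yields $dA^{\mu_1}\le dA^{\mu_2}$ through the Revuz correspondence. To exploit the monotonicity hypothesis (P2) I would split the generator difference according to which of the two alternatives is assumed. In the first case one writes
\begin{equation*}
f^1(\cdot,Y^1)-f^2(\cdot,Y^2)=\big(f^1(\cdot,Y^1)-f^1(\cdot,Y^2)\big)+\big(f^1(\cdot,Y^2)-f^2(\cdot,Y^2)\big),
\end{equation*}
where on $\{\hat Y>0\}$ the first bracket is controlled by the monotonicity of $f^1$ and the second is nonpositive by $f^1_{u_2}\le f^2_{u_2}$; in the second case one splits symmetrically at $Y^1$ and uses the monotonicity of $f^2$ together with $f^1_{u_1}\le f^2_{u_1}$.

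The core computation is to apply the Meyer-It\^o (Tanaka) formula to $(\hat Y_t)^+$, as is done for the truncations $T_ku$ in the proof of Theorem \ref{th4.9}(i). After introducing a suitable exponential weight in $t$ to absorb the constant $\alpha$ from (P2), the local-time term at the level $0$ enters with a favourable sign, the martingale part has vanishing $E_z$-expectation, the contribution of $dA^{\mu_1}-dA^{\mu_2}$ restricted to $\{\hat Y_{s-}>0\}$ has the correct sign by $\mu_1\le\mu_2$, and the generator term is controlled by the splitting above. A backward Gronwall argument then gives $E_z(\hat Y_t)^+=0$ for every $t$, hence $Y^1_t\le Y^2_t$, and finally $u_1\le u_2$ q.e.

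The main obstacle is the presence of discontinuities. Because $L_t$ may be nonlocal, $\BBX$ and the martingales $M^i$ carry jumps and $A^{\mu_i}$ may have a nontrivial jump part, so the comparison must be performed for BSDEs driven by discontinuous integrators. The delicate step is to check that, under the Meyer-It\^o formula, the jump corrections to $(\hat Y)^+$ and the jump part of $dA^{\mu_1}-dA^{\mu_2}$ both carry the right sign on $\{\hat Y_{s-}>0\}$; this is exactly where the description of the jumps of $A^{\mu_i}$ via their Revuz measures (as in the use of \cite[Theorem 16.8]{GS} within the proof of Theorem \ref{th4.9}) is needed. Once these sign considerations are settled, the comparison, and hence the asserted inequality, follow.
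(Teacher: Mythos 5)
Your proposal takes essentially the same route as the paper: the paper's entire proof is the reduction you perform first, namely applying Theorem \ref{th4.9}(ii) to turn each $u_i$ into a probabilistic solution and then invoking the comparison result \cite[Corollary 5.9]{K:JFA} (via the BSDE representation of Remark \ref{rem4.2}(ii)). The Meyer--It\^o/Tanaka argument for $(\hat Y)^+$ that you sketch afterwards is in effect a re-proof of that cited corollary rather than a different approach, and it is consistent with how such comparison results are established in \cite{K:JFA}.
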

\begin{proof}
Follows from Theorem \ref{th4.9} and \cite[Corollary 5.9]{K:JFA}.
\end{proof}

\begin{corollary}
If \mbox{\rm{(P2)}} is satisfied then there exists at most one
renormalized solution of \mbox{\rm{(\ref{eq5.1})}}.
\end{corollary}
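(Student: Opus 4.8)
The plan is to deduce uniqueness directly from the comparison theorem stated just above, by applying it in both directions with coinciding data. So suppose $u_{1}$ and $u_{2}$ are two renormalized solutions of \mbox{\rm(\ref{eq5.1})} associated with the \emph{same} terminal condition $\varphi$ and the same right-hand side $(f,\mu)$, where $f$ satisfies \mbox{\rm(P2)}. I would first invoke the comparison theorem with the choices $\varphi_{1}=\varphi_{2}=\varphi$, $\mu_{1}=\mu_{2}=\mu$ and $f^{1}=f^{2}=f$. With these choices the hypotheses $\varphi_{1}\le\varphi_{2}$ $m_{1}$-a.e. and $\mu_{1}\le\mu_{2}$ hold trivially (as equalities), and since $f$ satisfies \mbox{\rm(P2)}, the first alternative of the hypothesis is met: $f^{1}$ satisfies \mbox{\rm(P2)} and $f^{1}_{u_{2}}=f^{2}_{u_{2}}\le f^{2}_{u_{2}}$ $m_{1}$-a.e. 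The comparison theorem then yields $u_{1}(z)\le u_{2}(z)$ for q.e. $z\in E_{0,T}$.

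Next I would interchange the roles of the two solutions and repeat the argument verbatim, now regarding $u_{2}$ as the first solution and $u_{1}$ as the second, still with identical data and again relying on \mbox{\rm(P2)} for $f$. This produces the reverse inequality $u_{2}(z)\le u_{1}(z)$ for q.e. $z\in E_{0,T}$. Combining the two inequalities gives $u_{1}=u_{2}$ q.e., which is precisely the asserted uniqueness.

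This is a routine two-sided application of the comparison principle, so I do not expect a genuine obstacle. The only point to verify carefully is that, once the data of the two solutions coincide, every order hypothesis of the comparison theorem degenerates to an equality, so that the monotonicity assumption \mbox{\rm(P2)} alone is enough to trigger its conclusion in each direction; this parallels exactly the elliptic situation in Corollary \ref{cor3.7}.
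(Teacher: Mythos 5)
Your proposal is correct and is exactly the argument the paper intends: the corollary is stated as an immediate consequence of the preceding comparison theorem, applied twice with coinciding data $(\varphi,f,\mu)$ to obtain $u_1\le u_2$ and $u_2\le u_1$ q.e. on $E_{0,T}$. The only point worth noting is that (P2) permits $\alpha>0$ (a one-sided Lipschitz condition rather than monotonicity), but since you use the comparison theorem as a black box this makes no difference to your argument.
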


\begin{theorem}
Assume \mbox{\rm{(P1)--(P3)}}. Then there exists renormalized
solution of \mbox{\rm{(\ref{eq5.1})}}.
\end{theorem}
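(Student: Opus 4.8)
The plan is to obtain a renormalized solution as a by-product of the existence theory for probabilistic solutions developed in \cite{K:JFA}, combined with implication (i) of Theorem \ref{th4.9}. Recall that the standing setting already includes $\varphi\in L^1(E;m)$, so that by Remark \ref{rem4.3}(i) the terminal term $\delta_{\{T\}}\otimes\varphi\cdot m$ belongs to $\RR(E_{0,T})$, while (P3) gives $\mu\in\MM_{0,b}(E_{0,T})\subset\RR(E_{0,T})$ (see the end of Section \ref{sec2.3}). Thus all the data of \mbox{\rm(\ref{eq5.1})} lie in the admissible class, and the strategy is: first produce a probabilistic solution $u$ of \mbox{\rm(\ref{eq5.1})} satisfying $f_u\in L^1(E_{0,T};m_1)$, and then invoke Theorem \ref{th4.9}(i) to conclude that this $u$ is a renormalized solution.

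For the construction of the probabilistic solution I would verify that the generator is admissible for the BSDE framework of \cite{K:JFA}: by (P1) the map $y\mapsto f(t,x,y)$ is continuous, and by (P2) it satisfies a one-sided Lipschitz estimate with constant $\alpha\in\BR$. Note that on the finite horizon $\zeta_{\tau}\le T$ (see \mbox{\rm(\ref{eq2.10})}) the one-sided constant $\alpha$ may have any sign, the term $\alpha|y-y'|^2$ being controllable on a bounded time interval; this is the parabolic counterpart of the monotonicity condition (E2) imposed in the elliptic Theorem \ref{th3.8}. Under these hypotheses the existence, uniqueness and regularity results of \cite{K:JFA} furnish a probabilistic solution $u$ represented by \mbox{\rm(\ref{eq5.2})}, together with the a priori bound $f_u\in L^1(E_{0,T};m_1)$. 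With $u$ and the integrability of $f_u$ in hand the last step is immediate: Theorem \ref{th4.9}(i) shows that $u$ is a renormalized solution of \mbox{\rm(\ref{eq5.1})}, which is exactly the assertion.

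The main obstacle is the $L^1$ bound $f_u\in L^1(E_{0,T};m_1)$, since this is precisely the hypothesis that Theorem \ref{th4.9}(i) requires and it is not a formal consequence of the probabilistic representation alone. Hypothesis (P3) only guarantees integrability of $f(\cdot,\cdot,y)$ for each fixed constant $y$, which does not by itself control the composition $f(\cdot,\cdot,u(\cdot))$ for the generally unbounded solution $u$. The remedy, carried out in \cite{K:JFA}, is to truncate and regularize $f$, solve the approximating BSDEs, and derive a uniform $L^1$ estimate on the generators by testing with the sign of the solution and absorbing the nonlinear term through the monotonicity (P2); this estimate is preserved in the limit. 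Once this existence theory is invoked the equivalence of Theorem \ref{th4.9} completes the argument, exactly as in the elliptic Theorem \ref{th3.8}.
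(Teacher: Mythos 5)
Your proposal is correct and follows exactly the paper's route: the paper's proof is the one-line citation ``Follows from Theorem \ref{th4.9} and \cite[Theorem 5.8, Proposition 5.10]{K:JFA}'', i.e.\ the existence of a probabilistic solution $u$ with $f_u\in L^1(E_{0,T};m_1)$ under (P1)--(P3) is taken from \cite{K:JFA} and Theorem \ref{th4.9}(i) converts it into a renormalized solution. Your additional discussion of why $f_u\in L^1$ holds (truncation plus the one-sided condition (P2), established in \cite{K:JFA}) is accurate but is delegated to the cited reference in the paper as well.
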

\begin{proof}
Follows from Theorem \ref{th4.9} and \cite[Theorem 5.8,
Proposition 5.10]{K:JFA}.
\end{proof}
\medskip

The results of \cite{K:JFA} used in the proof of Theorem
\ref{th4.9} can be generalized to quasi-regular time dependent
Dirichlet forms (see \cite[Remark 4.4]{K:JFA}). Moreover, if the
forms $B^{(t)}$, $t\in[0,T]$, are quasi-regular, then by
\cite[Theorem IV.2.2]{S} there exists a special standard process
$\BBM$ properly associated in the resolvent sense with the time
dependent form determined by $\{B^{(t)},t\in[0,T]\}$. Since one
can check that the results from the theory of stochastic processes
used in the proof of Theorem \ref{th4.9} hold true for such
process $\BBM$,  Theorem \ref{th4.9} can be extended to
quasi-regular time dependent Dirichlet forms.
\bigskip\\
{\large\bf Acknowledgements}
\smallskip\\
This work was supported by NCN Grant No. 2012/07/B/ST1/03508.

\end{document}